\theoremstyle{plain}
\newtheorem{thm}{\protect\theoremname}
  \theoremstyle{plain}
  \newtheorem{conjecture}[thm]{\protect\conjecturename}
  \theoremstyle{plain}
  \newtheorem{lem}[thm]{\protect\lemmaname}
  \providecommand{\conjecturename}{Conjecture}
  \providecommand{\lemmaname}{Lemma}
\providecommand{\theoremname}{Theorem}
\begin{document}

\title{Explicit formula for the average of Goldbach and prime tuples representations}

\author{Marco Cantarini}
\begin{abstract}
Let $\Lambda\left(n\right)$ be the Von Mangoldt function, let 
\[
r_{G}\left(n\right)=\underset{{\scriptstyle m_{1}+m_{2}=n}}{\sum_{m_{1},m_{2}\leq n}}\Lambda\left(m_{1}\right)\Lambda\left(m_{2}\right),
\]
\[
r_{PT}\left(N,h\right)=\sum_{n=0}^{N}\Lambda\left(n\right)\Lambda\left(n+h\right),\,h\in\mathbb{N}
\]
be the counting function of the Goldbach numbers and the counting
function of the prime tuples, respectively. Let $N>2$ be an integer.
We will find the explicit formulae for the averages of $r_{G}\left(n\right)$
and $r_{PT}\left(N,h\right)$ in terms of elementary functions, the
incomplete Beta function $B_{z}\left(a,b\right)$, series over $\rho$
that, with or without subscript, runs over the non-trivial zeros of
the Riemann Zeta function and the Dilogarithm function. We will also
prove the explicit formulae in an asymptotic form and a truncated
formula for the average of $r_{G}\left(n\right)$. Some observation
about these formulae and the average with Ces\`aro weight
\[
\frac{1}{\Gamma\left(k+1\right)}\sum_{n\leq N}r_{G}\left(n\right)\left(N-n\right)^{k},\,k>0
\]
are included.
\end{abstract}

\maketitle
\subjclass 2010 Mathematics Subject Classification:{ 11P32, 11N05}

\keywords{Key words and phrases: Goldbach problem, Primes in tuples, explicit formulae.}

\section{Introduction}

In this paper we prove an explicit formula and an asymptotic formula
for the average of the functions $r_{G}\left(n\right)$ and $r_{PT}\left(N,h\right)$,
which are the counting function of the Goldbach numbers and the counting
function of the prime tuples, respectively. This type of research
is classical; the first result for the average of counting function
of the Goldbach numbers was proved in 1991 by Fujii in a series of
paper \cite{Fuj1,Fuj2,Fuj3} writing two terms of the asymptotic expansion
with an error of $O\left(\left(N\log\left(N\right)\right)^{4/3}\right)$.
Then Granville \cite{Gran1,Gran2} gave the same result with a different
technique and Bhowmik and Schlage-Puchta \cite{Bho} improved the
error term to $O\left(N\log^{5}\left(N\right)\right)$. Finally, Languasco
and Zaccagnini \cite{LangZac1} were able to reach the error term
to $O\left(N\log^{3}\left(N\right)\right)$. In recent years there
has been some papers analyzing the weighed average with Ces\`aro weight
\begin{equation}
\frac{1}{\Gamma\left(k+1\right)}\sum_{n\leq N}r_{G}\left(n\right)\left(N-n\right)^{k},\,k>0,\label{eq:forma pesata}
\end{equation}
see \cite{Canta}, \cite{GoldYan} and \cite{LangZac2}. Even if the
technique of Languasco and Zaccagnini, developed to study \eqref{eq:forma pesata},
can be applied to various problems (see \cite{Canta2,Canta3,LangZac3})
in all of these papers there are some limitations over the parameter
$k$ due to some convergence problems. In a very recent paper Br\"udern,
Kaczorowski and Perelli \cite{Perelli} were able to find an explicit
formula which holds for all $k>0.$ We present an approach that analyzes
the pure average form or, in other words, the case $k=0.$ We will
find an explicit formula and we will prove that it is possible write
it as an asymptotic formula with three terms and an error term $O\left(N\right)$
without the assumption of the Riemann hypothesis (RH for brevity).
We will prove the following
\begin{thm}
Let $N>2$ be an integer. Then 
\begin{align*}
\sideset{}{^{\prime}}\sum_{n\leq2N}r_{G}\left(n\right)= & 2N^{2}-2\sum_{\rho}\frac{\left(2N-2\right)^{\rho+1}}{\rho\left(\rho+1\right)}\\
+ & 2\sum_{\rho_{1}}\left(2N\right)^{\rho_{1}}\left(\Gamma\left(\rho_{1}\right)\sum_{\rho_{2}}\frac{\left(2N\right)^{\rho_{2}}\Gamma\left(\rho_{2}\right)}{\Gamma\left(\rho_{1}+\rho_{2}+1\right)}\right.\\
- & \left.\sum_{\rho_{2}}\frac{\left(2N\right)^{\rho_{2}}}{\rho_{2}}\left(B_{1/N}\left(\rho_{2}+1,\rho_{1}\right)+B_{1/2}\left(\rho_{1},\rho_{2}+1\right)\right)\right)\\
+ & F\left(N\right)
\end{align*}
where
\[
\sideset{}{^{\prime}}\sum_{n\leq2N}r_{G}\left(n\right)=\sum_{n\leq2N}r_{G}\left(n\right)-\frac{r_{G}\left(2N\right)}{2}
\]
$B_{z}\left(a,b\right)$ is the incomplete Beta function, $\rho=\beta+i\gamma$,
with or without subscript, runs over the non-trivial zeros of the
Riemann Zeta function $\zeta\left(s\right)$ and $F\left(N\right)$
is a function that can be explicitly calculated in terms of elementary
functions, series over non-trivial zeros, Dilogarithm and incomplete
Beta functions and with 
\[
F\left(N\right)=O\left(N\right)
\]
as $N\rightarrow\infty.$ Furthermore for all $T^{\prime},\,T^{\prime\prime}>2$
we have
\begin{align*}
\sideset{}{^{\prime}}\sum_{n\leq2N}r_{G}\left(n\right)= & 2N^{2}-2\sum_{\rho:\,\left|\gamma\right|\leq T^{\prime}}\frac{\left(2N-2\right)^{\rho+1}}{\rho\left(\rho+1\right)}\\
+ & 2\sum_{\rho_{1}:\,\left|\gamma_{1}\right|\leq T^{\prime}}\left(2N\right)^{\rho_{1}}\left(\Gamma\left(\rho_{1}\right)\sum_{\rho_{2}:\,\left|\gamma_{2}\right|\leq T^{\prime\prime}}\frac{\left(2N\right)^{\rho_{2}}\Gamma\left(\rho_{2}\right)}{\Gamma\left(\rho_{1}+\rho_{2}+1\right)}\right.\\
- & \left.\sum_{\rho_{2}:\,\left|\gamma_{2}\right|\leq T^{\prime\prime}}\frac{\left(2N\right)^{\rho_{2}}}{\rho_{2}}\left(B_{1/N}\left(\rho_{2}+1,\rho_{1}\right)+B_{1/2}\left(\rho_{1},\rho_{2}+1\right)\right)\right)\\
+ & 2\sum_{\rho_{1}:\,\left|\gamma_{1}\right|\leq T^{\prime}}\frac{N^{\rho_{1}}}{\rho_{1}}\sum_{\rho_{2}:\,\left|\gamma_{2}\right|\leq T^{\prime\prime}}\frac{N^{\rho_{2}}}{\rho_{2}}-2\left(\sum_{\rho_{1}:\,\left|\gamma_{1}\right|\leq T^{\prime}}\frac{N^{\rho_{1}}}{\rho_{1}}\right)^{2}\\
+ & F\left(N,T^{\prime},T^{\prime\prime}\right)+O\left(\frac{N\log^{2}\left(NT^{\prime\prime}\right)T^{\prime}\log\left(T^{\prime}\right)G\left(N\right)}{T^{\prime\prime}}+\frac{N^{2}\log^{2}\left(T^{\prime}N\right)}{T^{\prime}}\right)
\end{align*}
where 
\[
G\left(N\right)=\begin{cases}
N\exp\left(-C\sqrt{\log\left(N\right)}\right) & \mathrm{without}\,\mathrm{RH}\\
\sqrt{N}\log^{2}\left(N\right) & \mathrm{with}\,\mathrm{RH}
\end{cases}
\]
and $F\left(N,T^{\prime},T^{\prime\prime}\right)$ is a function that
can be explicitly calculated in terms of elementary functions, series
over non-trivial zeros, Dilogarithm and incomplete Beta functions
and with the property
\[
F\left(N,T^{\prime},T^{\prime\prime}\right)\ll N
\]
where the implicit constant does not depend on $T^{\prime}$ and $T^{\prime\prime}$.
\end{thm}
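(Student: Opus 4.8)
\textit{Proof outline.} The plan is to reduce the counting function to the Chebyshev functions $\psi(x)=\sum_{n\le x}\Lambda(n)$, $\psi_{1}(x)=\int_{0}^{x}\psi(t)\,dt$ and $\psi_{0}$ (the symmetrised version entering the explicit formula), and then to insert the classical explicit formulae. Since $\sideset{}{^{\prime}}\sum_{n\le2N}r_{G}(n)=\sum_{m_{1}+m_{2}\le2N}\Lambda(m_{1})\Lambda(m_{2})-\tfrac12 r_{G}(2N)$, and since no pair with $m_{1},m_{2}>N$ can satisfy $m_{1}+m_{2}\le2N$, an inclusion--exclusion splitting the region at $N$ gives
\[
\sum_{m_{1}+m_{2}\le2N}\Lambda(m_{1})\Lambda(m_{2})=2\sum_{m_{1}\le N}\Lambda(m_{1})\psi(2N-m_{1})-\psi(N)^{2}.
\]
Using $2\psi(y)-\Lambda(y)=2\psi_{0}(y)$ one checks that the $\sideset{}{^{\prime}}\sum$ convention precisely absorbs the diagonal correction, so that
\[
\sideset{}{^{\prime}}\sum_{n\le2N}r_{G}(n)=2\sum_{m_{1}\le N}\Lambda(m_{1})\psi_{0}(2N-m_{1})-\psi(N)^{2}+O(\log^{2}N).
\]
This is where the factor $2$ and the restriction $m_{1}\le N$ come from; after the rescaling below the latter is responsible for the arguments $1/N$ and $1/2$ of the incomplete Beta functions.

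Next I would substitute $\psi_{0}(y)=y-\sum_{\rho}y^{\rho}/\rho-\log(2\pi)-\tfrac12\log(1-y^{-2})$, which is legitimate since $2N-m_{1}\ge N>1$. This splits $2\sum_{m_{1}\le N}\Lambda(m_{1})\psi_{0}(2N-m_{1})$ into an elementary sum $2\sum_{m_{1}\le N}\Lambda(m_{1})(2N-m_{1})$, a zero-sum $-2\sum_{\rho_{1}}\rho_{1}^{-1}\sum_{m_{1}\le N}\Lambda(m_{1})(2N-m_{1})^{\rho_{1}}$, a term $-2\log(2\pi)\psi(N)$, and a sum against $\log(1-y^{-2})$. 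One Abel summation turns the elementary sum into $2N\psi(N)+2\psi_{1}(N)$; inserting the explicit formulae for $\psi$ and $\psi_{1}$ and combining with $-\psi(N)^{2}$ yields the main term $2N^{2}$, the single zero-sum $-2\sum_{\rho}N^{\rho+1}/(\rho(\rho+1))$, a contribution quadratic in $\sum_{\rho}N^{\rho}/\rho$, and an admissible $O(N)$ (all $\Lambda(N)$-dependent terms collapsing to $O(\log N)$, and the constant, linear and $x^{-1}$-parts of the formula for $\psi_{1}$ being swept into the error).

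The core of the argument is the zero-sum. For each fixed $\rho_{1}$ I would Abel-sum to get
\[
\sum_{m_{1}\le N}\Lambda(m_{1})(2N-m_{1})^{\rho_{1}}=N^{\rho_{1}}\psi(N)+\rho_{1}\int_{2}^{N}(2N-t)^{\rho_{1}-1}\psi(t)\,dt,
\]
and then insert the explicit formula for $\psi(t)$ inside the now smooth integral. The $-\sum_{\rho_{2}}t^{\rho_{2}}/\rho_{2}$-part produces the double zero-sum: after $t=2Ns$ one is left with $2\sum_{\rho_{1},\rho_{2}}\rho_{2}^{-1}(2N)^{\rho_{1}+\rho_{2}}\int_{1/N}^{1/2}s^{\rho_{2}}(1-s)^{\rho_{1}-1}\,ds$, and splitting $\int_{1/N}^{1/2}=\int_{0}^{1/2}-\int_{0}^{1/N}$, using the reflection $B_{1/2}(a,b)+B_{1/2}(b,a)=B(a,b)$ and $B(\rho_{2}+1,\rho_{1})=\Gamma(\rho_{2}+1)\Gamma(\rho_{1})/\Gamma(\rho_{1}+\rho_{2}+1)$, this becomes exactly the displayed term $2\sum_{\rho_{1}}(2N)^{\rho_{1}}\bigl(\Gamma(\rho_{1})\sum_{\rho_{2}}(2N)^{\rho_{2}}\Gamma(\rho_{2})/\Gamma(\rho_{1}+\rho_{2}+1)-\sum_{\rho_{2}}(2N)^{\rho_{2}}\rho_{2}^{-1}(B_{1/N}(\rho_{2}+1,\rho_{1})+B_{1/2}(\rho_{1},\rho_{2}+1))\bigr)$. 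The $t$-part of $\psi_{0}(t)$ gives elementary integrals $\int_{2}^{N}(2N-t)^{\rho_{1}-1}t\,dt$ which, using $\sum_{\rho}x^{\rho+1}/(\rho+1)=x\sum_{\rho}x^{\rho}/\rho-\sum_{\rho}x^{\rho+1}/(\rho(\rho+1))$ and $4N=2(2N-2)+4$, combine with the single zero-sum of the previous paragraph into the second copy of $-2\sum_{\rho}(2N-2)^{\rho+1}/(\rho(\rho+1))$, plus $O(N)$. The boundary term $N^{\rho_{1}}\psi(N)$, together with $\psi(N)^{2}$, supplies the remaining contributions quadratic in $\sum_{\rho}N^{\rho}/\rho$; in the untruncated formula these cancel, and in the truncated one they reappear as the explicit artifact $2\sum_{|\gamma_{1}|\le T'}N^{\rho_{1}}\rho_{1}^{-1}\sum_{|\gamma_{2}|\le T''}N^{\rho_{2}}\rho_{2}^{-1}-2\bigl(\sum_{|\gamma_{1}|\le T'}N^{\rho_{1}}\rho_{1}^{-1}\bigr)^{2}$ coming from the mismatch between the outer truncation $T'$ and the inner truncation $T''$. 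Finally the $-2\log(2\pi)\psi(N)$, the $\log(1-y^{-2})$-sum, and the $\log(1-t^{-2})$-part of the inner integral (integrated against $(2N-t)^{\rho_{1}-1}$ and summed over $\rho_{1}$) contribute only elementary functions, dilogarithms $\operatorname{Li}_{2}$ — arising from $\int\log(1-t^{-2})(2N-t)^{-1}\,dt$ after partial fractions and the substitution $t\mapsto 2Nu$ — and incomplete Beta values; all of these, together with the constants and the diagonal terms, are collected into $F(N)$, which is then visibly $O(N)$.

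The main obstacle is the rigorous handling of the conditionally convergent series over the zeros. The single series $\sum_{\rho}x^{\rho}/\rho$, $\sum_{\rho}x^{\rho+1}/(\rho+1)$ and the double series over $(\rho_{1},\rho_{2})$ converge only in a summation-order-dependent way, so every interchange of $\sum_{\rho}$ with $\sum_{m_{1}}$ and with $\int$ above must be carried out with truncated sums, using $\psi(x)=x-\sum_{|\gamma|\le T}x^{\rho}/\rho+O\bigl(xT^{-1}\log^{2}(xT)+\log x\bigr)$, and the limits $T',T''\to\infty$ taken only at the very end; doing this is exactly what yields the truncated statement, with the zeros truncated at $T'$ in the outer sums and at $T''$ in the inner ones. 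The error term $O\bigl(N^{2}\log^{2}(T'N)/T'\bigr)$ then comes from truncating the outer $\rho_{1}$-sum and the factors $\psi(N)$, while $O\bigl(N\log^{2}(NT'')T'\log(T')G(N)/T''\bigr)$ comes from the inner $\rho_{2}$-sum: once it has been convolved against $\Lambda$ one estimates the tail $\sum_{|\gamma_{2}|>T''}$ by partial summation against $\psi(t)-t$, for which the best unconditional (or, under RH, conditional) bound $G(N)$ is used, the extra factor $T'\log(T')$ accounting for the outer sum of length $\asymp T'\log T'$ against which this is done. Establishing that what remains after all these cancellations is genuinely $\ll N$, uniformly in $T'$ and $T''$, is the last and most delicate point.
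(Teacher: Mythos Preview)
Your proposal is correct and follows essentially the same route as the paper: the same reduction to $2\sum_{n\le N}\Lambda(n)\psi(2N-n)-\psi(N)^{2}$, the same insertion of the explicit formula for $\psi_{0}$, and the same Abel-summation-plus-explicit-formula treatment of $\sum_{n\le N}\Lambda(n)(2N-n)^{\rho_{1}}$ (which the paper isolates as a standalone ``fundamental lemma'' for general exponent $\alpha$, but this is only a packaging difference). Two very minor remarks: the identity $\sideset{}{^{\prime}}\sum_{n\le2N}r_{G}(n)=2\sum_{n\le N}\Lambda(n)\psi_{0}(2N-n)-\psi(N)^{2}$ is in fact exact, so your $O(\log^{2}N)$ is unnecessary; and the $(2N-2)^{\rho+1}$ sum appears only once, via cancellation of the $N^{\rho+1}/(\rho(\rho+1))$ terms from $\psi_{1}(N)$ against those from the zero-sum, rather than as ``two copies''.
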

Note that the term 
\[
\sum_{\rho_{1}}\left(2N\right)^{\rho_{1}}\Gamma\left(\rho_{1}\right)\sum_{\rho_{2}}\frac{\left(2N\right)^{\rho_{2}}\Gamma\left(\rho_{2}\right)}{\Gamma\left(\rho_{1}+\rho_{2}+1\right)}
\]
is what we expect considering the formula in \cite{Perelli} and taking
$k=0.$ It is interesting to note that if we assume the third term
of the explicit formula in Theorem 1 grows in a suitable way as $N\rightarrow\infty$
then we can prove that every interval $\left[2N,2N+2H\right]$, where
$H=H\left(N\right)$ is a function of $N$ that grows in a suitable
way, contains a Goldbach number. More precisely, we propose the following
conjecture
\begin{conjecture}
\label{congettura}Under RH we have the estimation
\[
\sum_{\rho_{1}}\left(2N\right)^{\rho_{1}}\left(\Gamma\left(\rho_{1}\right)\sum_{\rho_{2}}\frac{\left(2N\right)^{\rho_{2}}\Gamma\left(\rho_{2}\right)}{\Gamma\left(\rho_{1}+\rho_{2}+1\right)}-\sum_{\rho_{2}}\frac{\left(2N\right)^{\rho_{2}}}{\rho_{2}}\left(B_{1/N}\left(\rho_{2}+1,\rho_{1}\right)+B_{1/2}\left(\rho_{1},\rho_{2}+1\right)\right)\right)\ll N.
\]
\end{conjecture}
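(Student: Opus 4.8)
The plan is to first collapse the inner sum over $\rho_{2}$ by means of standard identities for the (incomplete) Beta function, reducing the whole bracket to a single integral of the classical zero sum $E\left(y\right)=\sum_{\rho}y^{\rho}/\rho$, and then to bound the resulting sum over $\rho_{1}$. Using $B\left(a,b\right)=\Gamma\left(a\right)\Gamma\left(b\right)/\Gamma\left(a+b\right)$ and $\Gamma\left(\rho_{2}+1\right)=\rho_{2}\Gamma\left(\rho_{2}\right)$ one has $\Gamma\left(\rho_{1}\right)\Gamma\left(\rho_{2}\right)/\Gamma\left(\rho_{1}+\rho_{2}+1\right)=\rho_{2}^{-1}B\left(\rho_{1},\rho_{2}+1\right)$, so the first summand of the bracket becomes $\sum_{\rho_{2}}\left(2N\right)^{\rho_{2}}\rho_{2}^{-1}B\left(\rho_{1},\rho_{2}+1\right)$. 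Combining this with the two incomplete Beta terms and applying $B\left(a,b\right)-B_{1/2}\left(a,b\right)=B_{1/2}\left(b,a\right)$ together with $B_{1/2}\left(\rho_{2}+1,\rho_{1}\right)-B_{1/N}\left(\rho_{2}+1,\rho_{1}\right)=\int_{1/N}^{1/2}u^{\rho_{2}}\left(1-u\right)^{\rho_{1}-1}\,du$, the bracket telescopes to $\int_{1/N}^{1/2}\left(1-u\right)^{\rho_{1}-1}E\left(2Nu\right)\,du$, where
\[
E\left(y\right):=\sum_{\rho}\frac{y^{\rho}}{\rho}=y-\psi\left(y\right)-\log\left(2\pi\right)-\tfrac{1}{2}\log\left(1-y^{-2}\right)
\]
is the zero sum from the explicit formula for $\psi$. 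Hence the quantity to be bounded is
\[
S\left(N\right):=\sum_{\rho_{1}}\left(2N\right)^{\rho_{1}}\int_{1/N}^{1/2}\left(1-u\right)^{\rho_{1}-1}E\left(2Nu\right)\,du.
\]

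Under RH one has $E\left(y\right)\ll\sqrt{y}\log^{2}y$ and $\left|\left(2N\right)^{\rho_{1}}\right|=\sqrt{2N}$, so after extracting $\sqrt{2N}$ the target $S\left(N\right)\ll N$ is equivalent to $\sum_{\rho_{1}}\left(2N\right)^{i\gamma_{1}}g\left(\rho_{1}\right)\ll\sqrt{N}$ with $g\left(\rho_{1}\right)=\int_{1/N}^{1/2}\left(1-u\right)^{\rho_{1}-1}E\left(2Nu\right)\,du$. The substitution $1-u=e^{-w}$ shows that $g\left(\rho_{1}\right)=\widehat{\Phi}\left(\gamma_{1}\right)$ is the Fourier transform at frequency $\gamma_{1}$ of $\Phi\left(w\right)=e^{-w/2}E\left(2N\left(1-e^{-w}\right)\right)$, supported on a fixed bounded interval. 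The crude bound $\left|g\left(\rho_{1}\right)\right|\ll\sqrt{N}\log^{2}N$ holds uniformly but carries no decay in $\gamma_{1}$, so cancellation is indispensable: either one gains decay of $\widehat{\Phi}$ in $\gamma_{1}$, which is obstructed by the roughness of $\psi$ (its jumps at prime powers make $\Phi$ oscillate on the scale $1/N$), or one exploits cancellation across the zeros through the Guinand--Weil explicit formula, which converts $\sum_{\rho_{1}}\left(2N\right)^{i\gamma_{1}}\widehat{\Phi}\left(\gamma_{1}\right)$ into a sum over prime powers $m\in\left[N,2N\right]$ weighted by $\Lambda\left(m\right)m^{-1/2}\Phi\left(\log\left(2N/m\right)\right)$.

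The hard part will be establishing square-root cancellation in this prime-power correlation. Evaluating $\Phi\left(\log\left(2N/m\right)\right)=\sqrt{m/\left(2N\right)}\,E\left(2N-m\right)$ and using $E\left(y\right)=y-\psi\left(y\right)+O\left(1\right)$ turns the sum into one of the shape $\left(2N\right)^{-1/2}\sum_{N\le m\le2N}\Lambda\left(m\right)\left(\psi\left(2N-m\right)-\left(2N-m\right)\right)$, so the conjecture is essentially equivalent to
\[
\sum_{N\le m\le2N}\Lambda\left(m\right)\left(\psi\left(2N-m\right)-\left(2N-m\right)\right)\ll N.
\]
This is a correlation of the von Mangoldt function with the prime number theorem error term and is itself of Goldbach type: expanding $\psi\left(2N-m\right)$ reproduces a restricted form of $\sum r_{G}$, so that, via Theorem 1, the estimate is equivalent to a sharp $O\left(N\right)$ bound for the average of $r_{G}$ after removal of the main term $2N^{2}$ and the single-zero term $-2\sum_{\rho}\left(2N-2\right)^{\rho+1}/\left(\rho\left(\rho+1\right)\right)$. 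I would attack it with mean-value and large-sieve inequalities for the Dirichlet polynomial $\sum_{\rho_{1}}\left(2N\right)^{i\gamma_{1}}\widehat{\Phi}\left(\gamma_{1}\right)$ over the zeros, supplemented by pair-correlation information on $\left\{\gamma\right\}$ to control the diagonal frequencies produced by the twist $\left(2N\right)^{i\gamma_{1}}$. I expect the principal obstacle to be precisely that this square-root cancellation lies beyond what RH alone currently yields; I note that the primed normalization $\sideset{}{^{\prime}}\sum$ removes the spiky term $r_{G}\left(2N\right)/2$, which can be as large as $\asymp N\log\log N$, so the known $\Omega\left(N\log\log N\right)$ oscillation of the unprimed Goldbach average does not by itself contradict the conjectured $O\left(N\right)$ bound for $S\left(N\right)$ --- but turning this consistency into a proof is the genuine difficulty, and is the reason the statement is offered only as a conjecture.
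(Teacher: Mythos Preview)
The statement is labelled \emph{Conjecture} in the paper and no proof is given; the author merely assumes it in order to deduce Theorem~3. So there is no ``paper's own proof'' to compare against, and any honest assessment of your proposal must be on its own merits.

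Your reduction of the bracket to $\int_{1/N}^{1/2}(1-u)^{\rho_{1}-1}E(2Nu)\,du$ via the Beta identities is correct and is essentially the inverse of the manipulation the paper performs in its Lemma~\ref{lemma fondamentale} (there the integral $\int_{2/y}^{\lfloor x\rfloor/y}u^{\rho}(1-u)^{\alpha-1}\,du$ is split into the full Beta plus two incomplete pieces). Your subsequent reduction, via the explicit formula over $\rho_{1}$, to the correlation $\sum_{N\le m\le 2N}\Lambda(m)\bigl(\psi(2N-m)-(2N-m)\bigr)\ll N$ is also correct, and indeed this is precisely the content of the paper's identity \eqref{eq:identit=0000E0 base} read backwards: the conjecture is equivalent to $\sideset{}{^{\prime}}\sum_{n\le 2N}r_{G}(n)-2N^{2}+2\sum_{\rho}(2N-2)^{\rho+1}/(\rho(\rho+1))\ll N$.

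Your conclusion is the right one: the argument is circular, and the required square-root cancellation is not known to follow from RH alone. The pair-correlation and large-sieve ideas you mention are the natural tools, but none of them currently closes the gap. In short, you have correctly diagnosed why the author states this as a conjecture rather than a theorem, and your write-up makes the equivalence with the Goldbach $O(N)$ problem more transparent than the paper itself does; but there is no proof here, nor does the paper claim one.
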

If Conjecture \ref{congettura} holds then we get the following
\begin{thm}
Assume that Conjecture \ref{congettura} holds. Then in every interval
$\left[2N,2N+2H\right]$ where $H=H\left(N\right)$ is 
\begin{equation}
H\left(N\right)=C\,\log\left(\log\left(N\right)\right)\label{eq:crescita H}
\end{equation}
where $C>0$ is a sufficiently large constant contains a Goldbach
number.
\end{thm}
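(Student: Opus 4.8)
The plan is to subtract the explicit formula of Theorem~1 from itself, evaluated at $2N+2H$ and at $2N$: the main terms will produce a quantity of size $\asymp NH$, every remaining contribution will be shown to be $O(N)$, and positivity then forces a Goldbach number into the interval once $C$ is large. I would work under RH and Conjecture~\ref{congettura} and take $H=\lfloor C\log\log N\rfloor$, so that $2N+2H$ is even, the interval $[2N,2N+2H]$ is contained in the one of the statement, and Theorem~1 is available at both $N$ and $N+H$. Writing $S(M)=\sideset{}{^{\prime}}\sum_{n\leq M}r_{G}(n)$ and combining the non-truncated formula (with $2N$ replaced by the even number $M$) with Conjecture~\ref{congettura}, which bounds the double sum over $\rho_{1},\rho_{2}$ by $O(M)$, and with $F(M/2)=O(M)$, one gets
\[
S(M)=\frac{M^{2}}{2}-2\sum_{\rho}\frac{(M-2)^{\rho+1}}{\rho(\rho+1)}+O(M),
\]
and hence
\[
S(2N+2H)-S(2N)=\bigl(4NH+2H^{2}\bigr)-2\sum_{\rho}\frac{(2N+2H-2)^{\rho+1}-(2N-2)^{\rho+1}}{\rho(\rho+1)}+O(N).
\]

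The next step would be to bound the difference of the single sum over zeros. Setting $x=2N-2$, both $\sum_{\rho}x^{\rho+1}/(\rho(\rho+1))$ and $\sum_{\rho}(x+2H)^{\rho+1}/(\rho(\rho+1))$ converge absolutely, and the explicit formula for $\int_{0}^{x}\psi(t)\,dt$ (equivalently, termwise integration of the explicit formula for $\psi$) shows that their difference equals $\int_{x}^{x+2H}\bigl(t-\psi(t)\bigr)\,dt+O(H)$; under RH the bound $t-\psi(t)\ll\sqrt t\log^{2}t$ then makes this $\ll H\sqrt N\log^{2}N$. (Alternatively one estimates $\int_{x}^{x+2H}t^{\rho}\,dt\ll\min\{H\sqrt N,\ N^{3/2}/|\gamma|\}$ by a trivial bound together with one integration by parts, and sums over $\rho$.) Therefore
\[
S(2N+2H)-S(2N)=4NH+O\!\bigl(N+H\sqrt N\log^{2}N\bigr)=4CN\log\log N+O(N),
\]
with an absolute implied constant.

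To turn this into an actual Goldbach number I would use the telescoping identity $\sum_{2N\leq n\leq 2N+2H}r_{G}(n)=\bigl(S(2N+2H)-S(2N)\bigr)+\tfrac12 r_{G}(2N)+\tfrac12 r_{G}(2N+2H)$ and the nonnegativity of $r_{G}$, which give $\sum_{2N\leq n\leq 2N+2H}r_{G}(n)\geq 4CN\log\log N+O(N)$; this exceeds any prescribed multiple of $N$ as soon as $C$ is large enough and $N$ is beyond an absolute bound, the finitely many smaller $N$ being handled directly. Finally I would subtract the contribution of the terms in which one argument is a proper prime power: over the whole interval this is $\ll H\log N\sum_{p^{a}\leq 2N+2H,\,a\geq2}\Lambda(p^{a})\ll H\sqrt N\log N=o(N)$, so that $\sum_{2N\leq n\leq 2N+2H}\sum_{p+q=n}\log p\,\log q>0$ and some $n_{0}\in[2N,2N+2H]$ is a sum of two primes.

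The hard part is the second paragraph. The individual sums $\sum_{\rho}(2N-2)^{\rho+1}/(\rho(\rho+1))$ are only $O(N^{3/2})$ under RH, which by itself is hopeless against the target size $O(NH)$; the whole argument rests on the cancellation produced by differencing, which, written as an integral, trades the size $N^{3/2}$ for $H$ times the size of the prime-counting error $t-\psi(t)$, and this is precisely what permits $H$ as small as $\log\log N$. Conjecture~\ref{congettura} plays the complementary role of confining the genuinely new double-sum part of the formula to $O(N)$, so that it, too, is dominated by $4NH$.
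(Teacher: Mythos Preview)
Your proof is correct and follows essentially the same route as the paper: difference the explicit formula at $2N$ and $2N+2H$, express the resulting difference of the single sums over zeros as an integral of $\sum_\rho t^\rho/\rho$ (equivalently of $t-\psi(t)$), and bound it by $O(H\sqrt N\log^2 N)$ under RH so that the main term $4NH$ dominates once $H=C\log\log N$ with $C$ large. Your treatment of the boundary terms via nonnegativity of $\tfrac12 r_G(2N)+\tfrac12 r_G(2N+2H)$ is marginally cleaner than the paper's, which instead appeals to the upper bound $r_G(M)\ll M\log\log M$ to control $\tfrac12 r_G(2N+2H)-\tfrac12 r_G(2N)$, and you make explicit the prime-power cleanup that the paper omits.
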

\begin{proof}
From Theorem 1 and Conjecture \ref{congettura} we get, for every
$H>0$, that
\begin{align*}
\sum_{n=2N+1}^{2N+2H}r_{G}\left(n\right)= & 4NH+2H^{2}-2\sum_{\rho}\frac{\left(2N+2H-2\right)^{\rho+1}}{\rho\left(\rho+1\right)}+2\sum_{\rho}\frac{\left(2N-2\right)^{\rho+1}}{\rho\left(\rho+1\right)}\\
+ & \frac{r_{G}\left(2N+2H\right)}{2}-\frac{r_{G}\left(2N\right)}{2}+O\left(N\right)
\end{align*}
then we can observe that
\[
-2\sum_{\rho}\frac{\left(2N+2H-2\right)^{\rho+1}}{\rho\left(\rho+1\right)}+2\sum_{\rho}\frac{\left(2N-2\right)^{\rho+1}}{\rho\left(\rho+1\right)}=-2\int_{2N-2}^{2N+2H-2}\sum_{\rho}\frac{t^{\rho}}{\rho}dt
\]
since we know we can switch the integral and the series over the non-trivial
zeros (see Lemma \ref{lem:scambio formula expl}). So, by \eqref{eq:stima zeri classica},
we get
\[
-2\sum_{\rho}\frac{\left(2N+2H-2\right)^{\rho+1}}{\rho\left(\rho+1\right)}+2\sum_{\rho}\frac{\left(2N-2\right)^{\rho+1}}{\rho\left(\rho+1\right)}\ll H\sqrt{N}\log^{2}\left(2N+2H\right)
\]
and since
\[
r_{G}\left(M\right)\ll M\mathfrak{S}\left(M\right)\ll M\,\log\left(\log\left(M\right)\right),
\]
where
\[
\mathfrak{S}\left(M\right)=2\prod_{p>2}\left(1-\frac{1}{\left(p-2\right)^{2}}\right)\prod_{p\mid M:\,p>2}\frac{p-1}{p-2},\,p\,\mathrm{prime}\,\mathrm{number},
\]
if $M$ is even and vanishes if $M$ is odd (see for example \cite{MoVa},
Theorem $3.13$), then if we take $H=H\left(N\right)$ as in \eqref{eq:crescita H}
we get the thesis. 
\end{proof}
Probably, with a more accurate analysis, it is possible to obtain
$H\left(N\right)$ as a large constant but it is not the aim of this
paper. 

We need some comments about the truncated formula. This form is interesting
since allows to work with finite sums instead of series and so it
is reasonable to think that, with a clever choice of $T^{\prime}$
and $T^{\prime\prime}$, we can estimate the double sums efficiently.
As we will see in the proof the error term in the formula strictly
depends on the choice of some parameters and, probably, is not optimized;
we expect that a better analysis can be done and this will be the
subject of future research.

In Section 4 we will also talk about the possibility to use our method
to calculate the weighed form \eqref{eq:forma pesata}. 

Let us talk about the average of prime tuples. We can recall Bombieri
and Davemport \cite{BomDav}, Maier and Pomerance \cite{MaPo} and
Balog \cite{Bal}, which obtained a ``Bombieri-Vinogradov type results''.
We will prove the following
\begin{thm}
Let $N>2$ and $0\leq M\leq N$ be integers. Then 
\begin{align}
\sideset{}{^{\prime}}\sum_{h=0}^{M}r_{PT}\left(N,h\right)= & NM+\sum_{\rho}\frac{N^{\rho+1}}{\rho\left(\rho+1\right)}-\sum_{\rho}\frac{\left(N+M\right)^{\rho+1}}{\rho\left(\rho+1\right)}+\sum_{\rho}\frac{\left(2+M\right)^{\rho+1}}{\rho\left(\rho+1\right)}\nonumber \\
- & \left(\sum_{\rho_{1}}\sum_{\rho_{2}}\frac{M^{\rho_{1}+\rho_{2}}\left(-1\right)^{\rho_{2}+1}}{\rho_{2}}\left(B_{-2/M}\left(\rho_{2}+1,\rho_{1}\right)-B_{-N/M}\left(\rho_{2}+1,\rho_{1}\right)\right)\right)\cdot1\left(M\right)\label{eq:gemelli 1}\\
- & \left(\sum_{\rho_{1}}\sum_{\rho_{2}}\frac{N^{\rho_{1}+\rho_{2}}-2^{\rho_{1}+\rho_{2}}}{\rho_{2}\left(\rho_{1}+\rho_{2}\right)}\right)\cdot\widetilde{1}\left(M\right)+\sum_{\rho_{1}}\frac{N^{\rho_{1}}}{\rho_{1}}\sum_{\rho_{2}}\frac{\left(N+M\right)^{\rho_{2}}}{\rho_{2}}\label{eq:gemelli 2}\\
+ & \frac{\Lambda\left(N\right)}{2}\sum_{\rho}\frac{N^{\rho}}{\rho}-\frac{\Lambda\left(N\right)}{2}\sum_{\rho}\frac{\left(N+M\right)^{\rho}}{\rho}\\
+ & G\left(N,M\right)\nonumber 
\end{align}
where 
\[
1\left(M\right)=\begin{cases}
0, & M=0\\
1, & M>0,
\end{cases}
\]
\[
\widetilde{1}\left(M\right)=\begin{cases}
2, & M=0\\
1, & M>0,
\end{cases}
\]
\[
\sideset{}{^{\prime}}\sum_{h=0}^{M}r_{PT}\left(N,h\right)=\sum_{h=0}^{M}r_{PT}\left(N,h\right)-\frac{r_{PT}\left(N,M\right)}{2}-\frac{r_{PT}\left(N,0\right)}{2},
\]
and $G\left(N,M\right)$ is a function that can be explicitly calculated
in terms of special functions like the incomplete Beta function and
with the property
\[
G\left(N,M\right)\ll\begin{cases}
N\left(M+1\right)\exp\left(-C\sqrt{\log\left(N\right)}\right), & \mathrm{without}\,\mathrm{RH}\\
\sqrt{N}\left(M+1\right)\log^{2}\left(N\right), & \mathrm{with}\,\mathrm{RH}
\end{cases}
\]
and $C>0$ is a real constant and the implicit constant does not depend
on $M$. 
\end{thm}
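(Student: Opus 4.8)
The plan is to reduce the double sum to a single sum over $n$ and then apply the explicit formula for $\psi$ twice. I may assume $M\ge 1$, since for $M=0$ the left--hand side is identically $0$ by the very definition of the primed sum, and only the consistency of the stated formula has to be checked in that case. First I would record that, interpreting the prime in the $h$--sum as half weights at $h=0$ and $h=M$,
\[
\sideset{}{^{\prime}}\sum_{h=0}^{M}\Lambda\left(n+h\right)=\frac{\Lambda\left(n\right)}{2}+\sum_{h=1}^{M-1}\Lambda\left(n+h\right)+\frac{\Lambda\left(n+M\right)}{2}=\psi_{0}\left(n+M\right)-\psi_{0}\left(n\right)
\]
for every integer $n\ge 0$, where $\psi\left(x\right)=\sum_{m\le x}\Lambda\left(m\right)$ and $\psi_{0}$ is its usual normalisation. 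Exchanging the two summations then gives
\[
\sideset{}{^{\prime}}\sum_{h=0}^{M}r_{PT}\left(N,h\right)=\sum_{n=0}^{N}\Lambda\left(n\right)\bigl(\psi_{0}\left(n+M\right)-\psi_{0}\left(n\right)\bigr),
\]
and since $\Lambda\left(0\right)=\Lambda\left(1\right)=0$ the range is effectively $2\le n\le N$, so the explicit formula may be used without restriction.

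Next I would insert $\psi_{0}\left(x\right)=x-\sum_{\rho}x^{\rho}/\rho-\log\left(2\pi\right)-\tfrac12\log\left(1-x^{-2}\right)$ (in truncated form, so that every interchange of $\sum_{\rho}$ with a finite sum or an integral is legitimate; see Lemma~\ref{lem:scambio formula expl}) into the difference $\psi_{0}\left(n+M\right)-\psi_{0}\left(n\right)$. This produces the main term $M\sum_{2\le n\le N}\Lambda\left(n\right)=M\psi\left(N\right)$, the ``zero term'' $-\sum_{\rho_{1}}\rho_{1}^{-1}\sum_{2\le n\le N}\Lambda\left(n\right)\bigl(\left(n+M\right)^{\rho_{1}}-n^{\rho_{1}}\bigr)$, and a $\log\left(1-x^{-2}\right)$--contribution which is $O\left(\sum_{n}\Lambda\left(n\right)n^{-2}\right)=O\left(1\right)$ and goes into $G\left(N,M\right)$. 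For the zero term I would apply partial summation (Riemann--Stieltjes integration by parts against $d\psi$) to each of $\sum_{n\le N}\Lambda\left(n\right)\left(n+M\right)^{\rho_{1}}$ and $\sum_{n\le N}\Lambda\left(n\right)n^{\rho_{1}}$, producing the boundary terms $\psi\left(N\right)\left(N+M\right)^{\rho_{1}}$, $\psi\left(N\right)N^{\rho_{1}}$ and the integrals $\rho_{1}\int_{2}^{N}\psi\left(t\right)\left(t+M\right)^{\rho_{1}-1}dt$, $\rho_{1}\int_{2}^{N}\psi\left(t\right)t^{\rho_{1}-1}dt$, and then substitute the explicit formula a \emph{second} time for $\psi\left(t\right)$ inside these integrals. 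The term $t$ yields the elementary integrals $\int_{2}^{N}t\left(t+M\right)^{\rho_{1}-1}dt$ and $\int_{2}^{N}t^{\rho_{1}}dt$; the term $-\sum_{\rho_{2}}t^{\rho_{2}}/\rho_{2}$ yields, from $t^{\rho_{1}-1}$, the integrals $\int_{2}^{N}t^{\rho_{1}+\rho_{2}-1}dt=\bigl(N^{\rho_{1}+\rho_{2}}-2^{\rho_{1}+\rho_{2}}\bigr)/\left(\rho_{1}+\rho_{2}\right)$, which is the source of the double sum in \eqref{eq:gemelli 2}, and, from $\left(t+M\right)^{\rho_{1}-1}$, the integrals $\int_{2}^{N}t^{\rho_{2}}\left(t+M\right)^{\rho_{1}-1}dt$, which the substitution $u=-t/M$ converts into $\left(-1\right)^{\rho_{2}+1}M^{\rho_{1}+\rho_{2}}\bigl(B_{-N/M}\left(\rho_{2}+1,\rho_{1}\right)-B_{-2/M}\left(\rho_{2}+1,\rho_{1}\right)\bigr)$ --- the origin of line \eqref{eq:gemelli 1} and of the indicator $1\left(M\right)$ (this substitution requires $M\ge 1$). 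Finally I would expand once more the boundary occurrences of $\psi\left(N\right)=\psi_{0}\left(N\right)+\tfrac12\Lambda\left(N\right)=N-\sum_{\rho}N^{\rho}/\rho-\log\left(2\pi\right)-\tfrac12\log\left(1-N^{-2}\right)+\tfrac12\Lambda\left(N\right)$: the term $NM$ comes from $M\psi\left(N\right)$; the terms $\sum_{\rho}N^{\rho+1}/\left(\rho\left(\rho+1\right)\right)$, $-\sum_{\rho}\left(N+M\right)^{\rho+1}/\left(\rho\left(\rho+1\right)\right)$, $\sum_{\rho}\left(2+M\right)^{\rho+1}/\left(\rho\left(\rho+1\right)\right)$ come from combining the powers $N\sum_{\rho}N^{\rho}/\rho$, $\left(N+M\right)\sum_{\rho}\left(N+M\right)^{\rho}/\rho$ etc.\ with the elementary integrals via $\rho^{-1}-\left(\rho+1\right)^{-1}=\rho^{-1}\left(\rho+1\right)^{-1}$; and the cross product $\bigl(\sum_{\rho_{1}}N^{\rho_{1}}/\rho_{1}\bigr)\bigl(\sum_{\rho_{2}}\left(N+M\right)^{\rho_{2}}/\rho_{2}\bigr)$ together with the two $\tfrac12\Lambda\left(N\right)$ terms emerge from $\psi\left(N\right)\bigl(\sum_{\rho}N^{\rho}/\rho-\sum_{\rho}\left(N+M\right)^{\rho}/\rho\bigr)$.

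Everything left over --- the pieces carrying $\log\left(2\pi\right)$, the residual integrals $\int_{2}^{N}\log\left(1-t^{-2}\right)\left(t+M\right)^{\rho_{1}-1}dt$ and their analogues, the errors from the two truncated explicit formulae, and the surplus $2^{\rho}$-- and $2^{\rho_{1}+\rho_{2}}$--constants --- is collected into $G\left(N,M\right)$; each such piece is expressible through elementary and incomplete Beta functions, and combining \eqref{eq:stima zeri classica} (i.e.\ $\sum_{\rho}x^{\rho}/\rho\ll\sqrt{x}\log^{2}x$ under RH, and the classical zero--free region estimate without it) with a suitable choice of the two truncation heights yields $G\left(N,M\right)\ll\sqrt{N}\left(M+1\right)\log^{2}N$ under RH and $G\left(N,M\right)\ll N\left(M+1\right)\exp\left(-C\sqrt{\log N}\right)$ unconditionally, the factor $M+1$ arising from the $O\left(M\right)$ pieces and from the error in $\psi_{0}\left(n+M\right)-\psi_{0}\left(n\right)$. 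It then remains to check the degenerate case $M=0$: there the left--hand side vanishes, line \eqref{eq:gemelli 1} disappears (whence $1\left(0\right)=0$), and the terms distinct for $M\ge 1$ coalesce, so that the weight $\widetilde{1}\left(M\right)$ (with $\widetilde{1}\left(0\right)=2$) in front of $\sum_{\rho_{1}}\sum_{\rho_{2}}\bigl(N^{\rho_{1}+\rho_{2}}-2^{\rho_{1}+\rho_{2}}\bigr)/\bigl(\rho_{2}\left(\rho_{1}+\rho_{2}\right)\bigr)$ is precisely what makes the formula hold in both regimes, as one verifies with the symmetrisation $\sum_{\rho_{1},\rho_{2}}x^{\rho_{1}+\rho_{2}}/\bigl(\rho_{2}\left(\rho_{1}+\rho_{2}\right)\bigr)=\tfrac12\bigl(\sum_{\rho}x^{\rho}/\rho\bigr)^{2}$. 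I expect the main obstacle to be not the (lengthy) bookkeeping of terms but the analytic control of $G\left(N,M\right)$ \emph{uniformly} in $M$: one has to make sure no error contribution picks up worse--than--linear dependence on $M$, which requires exploiting genuine cancellation in the sums over zeros rather than trivial bounds, together with a careful and presumably non--optimal balancing of the two truncation parameters.
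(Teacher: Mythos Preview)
Your proposal is correct and follows the same two--step scheme as the paper: insert the explicit formula for $\psi_{0}$, apply Abel summation to $\sum_{n}\Lambda(n)(n+M)^{\rho}$ and $\sum_{n}\Lambda(n)n^{\rho}$, then insert the explicit formula a second time inside the resulting integrals, with the substitution $t\mapsto -t/M$ producing the incomplete Beta terms. The analytic ingredients (Lemma~\ref{lem:scambio formula expl} for the interchange, the bound \eqref{eq:stima zeri classica} for the error) are used in the same way.

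The one genuine difference is the starting identity. The paper derives a parallelogram lattice--point identity and obtains
\[
\sum_{h=0}^{M}r_{PT}(N,h)=\sum_{n\le N}\Lambda(n)\psi(n+M)+\sum_{n\le N}\Lambda(n)\psi(n)-\psi(N)^{2},
\]
so that the two inner sums enter with the \emph{same} sign and a separate $\psi(N)^{2}$ must later be expanded and cancelled. You instead observe directly that $\sideset{}{^{\prime}}\sum_{h=0}^{M}\Lambda(n+h)=\psi_{0}(n+M)-\psi_{0}(n)$, giving
\[
\sideset{}{^{\prime}}\sum_{h=0}^{M}r_{PT}(N,h)=\sum_{n\le N}\Lambda(n)\bigl(\psi_{0}(n+M)-\psi_{0}(n)\bigr),
\]
which is equivalent (using $2\sum_{n\le N}\Lambda(n)\psi(n)=\psi(N)^{2}+r_{PT}(N,0)$) but cleaner: the primed normalisation appears automatically, there is no $\psi(N)^{2}$ to expand, and the main term $NM$ is immediate from $M\psi(N)$. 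The paper's route has the compensating advantage of mirroring the Goldbach argument in Section~3 term for term. One minor caution: with your difference form the double sum $\sum_{\rho_{1},\rho_{2}}(N^{\rho_{1}+\rho_{2}}-2^{\rho_{1}+\rho_{2}})/(\rho_{2}(\rho_{1}+\rho_{2}))$ first appears with the opposite sign and must be combined with the boundary piece $-\bigl(\sum_{\rho}N^{\rho}/\rho\bigr)^{2}$ via the symmetrisation you mention; the paper avoids this bookkeeping because that boundary piece cancels against $\psi(N)^{2}$ instead. Also, the $2^{\rho_{1}+\rho_{2}}$ contribution is kept explicitly in the statement (inside the $\widetilde{1}(M)$ term), not absorbed into $G(N,M)$.
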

Again we can observe that a precise control of the series in \eqref{eq:gemelli 1}
and \eqref{eq:gemelli 2} allows us to obtain information on the sum
$r_{PT}\left(N,h\right)$ with a fixed $M.$ 

I thank my mentor Alessandro Zaccagnini for a discussion on this topic.

\section{Lemmas}

We recall a Lemma that we use several times. 
\begin{lem}
\label{lem:scambio formula expl}Let $g$ be a continuously differentiable
function on $\left[a,b\right]$ with $2\leq a\leq b<\infty$ and $\psi\left(t\right)$
the Chebyshev psi function. We have
\begin{align*}
\int_{a}^{b}\psi\left(t\right)g\left(t\right)dt= & \int_{a}^{b}tg\left(t\right)dt-\sum_{\rho}\frac{1}{\rho}\int_{a}^{b}t^{\rho}g\left(t\right)dt\\
- & \int_{a}^{b}\left(\frac{\zeta^{\prime}}{\zeta}\left(0\right)+\frac{\log\left(1-1/t^{2}\right)}{2}\right)g\left(t\right)dt.
\end{align*}
\end{lem}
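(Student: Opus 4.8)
The plan is to integrate the Riemann--von Mangoldt explicit formula
\[
\psi(t)=t-\sum_{\rho}\frac{t^{\rho}}{\rho}-\frac{\zeta'}{\zeta}(0)-\frac12\log\!\left(1-\frac{1}{t^{2}}\right)
\]
against $g$ over $[a,b]$: the claimed identity is nothing but this formula multiplied by $g(t)$ and integrated term by term, so the whole content of the lemma is that exchanging $\sum_{\rho}$ with $\int_{a}^{b}$ is legitimate, and that is what I would establish.

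First I would pass to the truncated form. By the classical truncated explicit formula (see e.g. \cite{MoVa}), for $t\ge 2$ and $T\ge 2$,
\[
\psi(t)=t-\sum_{|\gamma|\le T}\frac{t^{\rho}}{\rho}-\frac{\zeta'}{\zeta}(0)-\frac12\log\!\left(1-\frac{1}{t^{2}}\right)+R(t,T),
\]
where $\frac{\zeta'}{\zeta}(0)=\log 2\pi$ and $R(t,T)\ll \dfrac{t\log^{2}(tT)}{T}+(\log t)\min\!\left(1,\dfrac{t}{T\langle t\rangle}\right)$, with $\langle t\rangle$ the distance from $t$ to the nearest prime power. Multiplying by $g$, which is continuous hence bounded on the compact interval $[a,b]$, and integrating, one obtains for every $T\ge 2$
\[
\int_{a}^{b}\psi(t)g(t)\,dt=\int_{a}^{b}tg(t)\,dt-\sum_{|\gamma|\le T}\frac{1}{\rho}\int_{a}^{b}t^{\rho}g(t)\,dt-\int_{a}^{b}\Bigl(\frac{\zeta'}{\zeta}(0)+\frac{\log(1-1/t^{2})}{2}\Bigr)g(t)\,dt+\int_{a}^{b}R(t,T)g(t)\,dt .
\]
A direct estimate of the two terms of $R$ shows $\int_{a}^{b}|R(t,T)|\,dt\to 0$ as $T\to\infty$ (the first term contributes $\ll_{a,b}\log^{2}(bT)/T$, and the second is handled using that $\{t\in[a,b]:\langle t\rangle<\delta\}$ has measure $\ll_{b}\delta$), so the last integral tends to $0$.

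It then remains only to check that the truncated sum over zeros converges as $T\to\infty$. Since $g\in C^{1}[a,b]$ I may integrate by parts,
\[
\int_{a}^{b}t^{\rho}g(t)\,dt=\left[\frac{t^{\rho+1}}{\rho+1}g(t)\right]_{a}^{b}-\frac{1}{\rho+1}\int_{a}^{b}t^{\rho+1}g'(t)\,dt\ll_{g,a,b}\frac{1}{|\gamma|},
\]
so that $\frac{1}{\rho}\int_{a}^{b}t^{\rho}g(t)\,dt\ll_{g,a,b}|\gamma|^{-2}$. Because the number of non-trivial zeros with imaginary part in $(n,n+1]$ is $O(\log n)$, the series $\sum_{\rho}|\gamma|^{-2}$ converges, hence $\sum_{\rho}\frac{1}{\rho}\int_{a}^{b}t^{\rho}g(t)\,dt$ converges absolutely and its symmetric partial sums converge to it. Letting $T\to\infty$ in the integrated form of the truncated explicit formula then yields precisely the assertion.

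The main obstacle is the estimate $\int_{a}^{b}|R(t,T)|\,dt\to 0$; once the pointwise truncated explicit formula with its error term is available this is routine, and everything else reduces to one integration by parts together with the classical convergence of $\sum_{\rho}|\gamma|^{-2}$. One may bypass $R$ altogether by writing $\int_{a}^{b}\psi(t)g(t)\,dt=[\psi_{1}(t)g(t)]_{a}^{b}-\int_{a}^{b}\psi_{1}(t)g'(t)\,dt$ with $\psi_{1}(t)=\int_{0}^{t}\psi(u)\,du$ and inserting the \emph{absolutely} convergent explicit formula for $\psi_{1}$ (so that the interchange of $\sum_{\rho}$ and $\int$ is immediate), then integrating by parts back to recover the same identity.
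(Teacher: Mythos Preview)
The paper does not actually supply a proof of this lemma; it simply cites \cite{Ram}, Lemma~4. Your argument is correct and self-contained: the truncated explicit formula with error $R(t,T)$, the estimate $\int_a^b|R(t,T)|\,dt\to 0$, and the integration-by-parts bound $\frac{1}{\rho}\int_a^b t^{\rho}g(t)\,dt\ll_{g,a,b}|\gamma|^{-2}$ together with $\sum_\rho|\gamma|^{-2}<\infty$ give exactly the statement. The alternative you sketch at the end---integrate by parts to pass to $\psi_1(t)=\int_0^t\psi$, insert its \emph{absolutely} convergent explicit formula, then undo the integration by parts---is in fact the route taken in the cited reference and is the cleaner of the two, since it avoids any discussion of the error term $R(t,T)$.
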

The proof can be found in \cite{Ram}, Lemma 4. The formula can be
extended to $b=\infty$ assuming that $g\left(t\right)$ decays at
$+\infty$ sufficiently fast (an example is present in \cite{Canta}).
Also, from the proof of the Lemma, is clear that the the formula holds
even if $g\left(t\right)\in\mathbb{C}$ with the hypothesis
\[
\int_{a}^{b}\left|g\left(t\right)\right|dt<\infty.
\]

Now we present our fundamental lemma. 
\begin{lem}
\label{lemma fondamentale}Let $x,y\in\mathbb{R},\,3\leq x\leq y$
and $\alpha\in\mathbb{C},\,\mathrm{Re}\left(\alpha\right)>0.$ Then
\begin{align*}
\sum_{n\leq\left\lfloor x\right\rfloor }\Lambda\left(n\right)\left(1-\frac{n}{y}\right)^{\alpha}= & \frac{y}{\alpha+1}\left(\left(1-\frac{2}{y}\right)^{\alpha+1}-\left(1-\frac{\left\lfloor x\right\rfloor }{y}\right)^{\alpha+1}\right)+2\left(1-\frac{2}{y}\right)^{\alpha}\\
- & \sum_{\rho}y^{\rho}\frac{\Gamma\left(\rho\right)\Gamma\left(\alpha+1\right)}{\Gamma\left(\rho+1+\alpha\right)}+\alpha\sum_{\rho}\frac{y^{\rho}}{\rho}\left(B_{2/y}\left(\rho+1,\alpha\right)+B_{(y-\left\lfloor x\right\rfloor )/y}\left(\alpha,\rho+1\right)\right)\\
- & \left(1-\frac{\left\lfloor x\right\rfloor }{y}\right)^{\alpha}\sum_{\rho}\frac{\left\lfloor x\right\rfloor ^{\rho}}{\rho}-\frac{\zeta^{\prime}}{\zeta}\left(0\right)\left(1-\frac{2}{y}\right)^{\alpha}-\frac{1}{2}\log\left(\frac{3}{4}\right)\left(1-\frac{2}{y}\right)^{\alpha}\\
+ & \omega\left(\alpha,y,x\right)+\frac{\Lambda\left(\left\lfloor x\right\rfloor \right)}{2}\left(1-\frac{\left\lfloor x\right\rfloor }{y}\right)^{\alpha}
\end{align*}
where $\left\lfloor x\right\rfloor $ is the floor function, $\psi\left(s\right)$
is the Chebyshev psi function,
\begin{align*}
\omega\left(\alpha,y,x\right)= & -\frac{\left(y-1\right)^{\alpha}}{2y^{\alpha}}\left(B_{(y-2)/(y-1)}\left(\alpha+1,0\right)-B_{(y-\left\lfloor x\right\rfloor )/(y-1)}\left(\alpha+1,0\right)\right)\\
- & \frac{\left(y+1\right)^{\alpha}}{2y^{\alpha}}\left(B_{(y-2)/(y+1)}\left(\alpha+1,0\right)-B_{(y-\left\lfloor x\right\rfloor )/(y+1)}\left(\alpha+1,0\right)\right)\\
+ & B_{(y-2)/y}\left(\alpha+1,0\right)-B_{(y-\left\lfloor x\right\rfloor )/y}\left(\alpha+1,0\right)
\end{align*}
 and $B_{z}\left(a,b\right)$ is the incomplete Beta function, with
the convention $B_{0}\left(a,b\right)=0.$ Furthermore if $y>x$ then
for all $T>2$ we have
\begin{align*}
\sum_{n\leq\left\lfloor x\right\rfloor }\Lambda\left(n\right)\left(1-\frac{n}{y}\right)^{\alpha}= & \frac{y}{\alpha+1}\left(\left(1-\frac{2}{y}\right)^{\alpha+1}-\left(1-\frac{\left\lfloor x\right\rfloor }{y}\right)^{\alpha+1}\right)+2\left(1-\frac{2}{y}\right)^{\alpha}\\
- & \sum_{\rho:\,\left|\gamma\right|\leq T}y^{\rho}\frac{\Gamma\left(\rho\right)\Gamma\left(\alpha+1\right)}{\Gamma\left(\rho+1+\alpha\right)}+\alpha\sum_{\rho:\,\left|\gamma\right|\leq T}\frac{y^{\rho}}{\rho}\left(B_{2/y}\left(\rho+1,\alpha\right)+B_{(y-\left\lfloor x\right\rfloor )/y}\left(\alpha,\rho+1\right)\right)\\
- & \left(1-\frac{\left\lfloor x\right\rfloor }{y}\right)^{\alpha}\sum_{\rho:\,\left|\gamma\right|\leq T}\frac{\left\lfloor x\right\rfloor ^{\rho}}{\rho}-\frac{\zeta^{\prime}}{\zeta}\left(0\right)\left(1-\frac{2}{y}\right)^{\alpha}-\frac{1}{2}\log\left(\frac{3}{4}\right)\left(1-\frac{2}{y}\right)^{\alpha}\\
+ & \omega\left(\alpha,y,x\right)+\left(1-\frac{\left\lfloor x\right\rfloor }{y}\right)^{\alpha}\frac{\Lambda\left(\left\lfloor x\right\rfloor \right)}{2}\\
+ & O\left(M\left(\alpha,y,x\right)\frac{\left|\alpha\right|\left\lfloor x\right\rfloor ^{2}\log^{2}\left(\left\lfloor x\right\rfloor T\right)}{yT}+\frac{\left\lfloor x\right\rfloor \log^{2}\left(\left\lfloor x\right\rfloor T\right)}{T}\right)
\end{align*}
where the implicit constant in the error term does not depend on $\alpha,y$
and $x$ and
\[
M\left(\alpha,y,x\right)=\begin{cases}
1, & \mathrm{Re}\left(\alpha\right)\geq1\\
\left(1-\frac{\left\lfloor x\right\rfloor }{y}\right)^{\mathrm{Re}\left(\alpha\right)-1}, & 0<\mathrm{Re}\left(\alpha\right)<1.
\end{cases}
\]
\end{lem}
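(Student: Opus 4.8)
The plan is to start from the Cesàro-type smoothing of the von Mangoldt sum on the left, namely
\[
S(x,y,\alpha)=\sum_{n\leq\lfloor x\rfloor}\Lambda(n)\left(1-\frac{n}{y}\right)^{\alpha},
\]
and to express it as a Riemann--Stieltjes integral against the Chebyshev function $\psi(t)$. Writing $f(t)=(1-t/y)^{\alpha}$, partial summation (Abel summation in the Riemann--Stieltjes form) gives
\[
S(x,y,\alpha)=\int_{2^{-}}^{\lfloor x\rfloor^{+}}f(t)\,d\psi(t)
= f(\lfloor x\rfloor)\psi(\lfloor x\rfloor)-f(2)\psi(2)-\int_{2}^{\lfloor x\rfloor}\psi(t)f'(t)\,dt,
\]
with $f'(t)=-\frac{\alpha}{y}(1-t/y)^{\alpha-1}$, plus the usual adjustment $-\frac{\Lambda(\lfloor x\rfloor)}{2}f(\lfloor x\rfloor)$ that converts the closed-interval Stieltjes integral into the symmetrically-normalized sum (this is the source of the last term in the statement). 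The next step is to feed the integral $\int_{2}^{\lfloor x\rfloor}\psi(t)f'(t)\,dt$ into Lemma \ref{lem:scambio formula expl} with $g(t)=f'(t)$; since $f'$ is continuously differentiable on $[2,\lfloor x\rfloor]$ (note $\lfloor x\rfloor\le y$ is exactly what is needed so that $1-t/y\ge 0$ throughout, and when $y>x$ there is no singularity), the hypothesis $\int|g|<\infty$ holds and the explicit-formula expansion applies termwise.

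The bulk of the work is then the evaluation of the three families of integrals produced by Lemma \ref{lem:scambio formula expl}:
\[
\int_{2}^{\lfloor x\rfloor} t\,f'(t)\,dt,\qquad \int_{2}^{\lfloor x\rfloor} t^{\rho}f'(t)\,dt,\qquad \int_{2}^{\lfloor x\rfloor}\left(\frac{\zeta'}{\zeta}(0)+\tfrac12\log(1-1/t^{2})\right)f'(t)\,dt.
\]
The first is elementary and, after combining with the boundary terms $f(\lfloor x\rfloor)\lfloor x\rfloor - f(2)\cdot 2$ coming from $\psi(t)\sim t$, produces the main term $\frac{y}{\alpha+1}\big((1-2/y)^{\alpha+1}-(1-\lfloor x\rfloor/y)^{\alpha+1}\big)+2(1-2/y)^{\alpha}$. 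For the second, the substitution $t=y u$ (or rather $1-t/y=v$) turns $\int t^{\rho}(1-t/y)^{\alpha-1}\,dt$ into incomplete Beta integrals: one gets a term $\int_0^{\cdot} v^{\alpha-1}(1-v)^{\rho}\,dv$-type piece contributing $B_{2/y}(\rho+1,\alpha)$ and $B_{(y-\lfloor x\rfloor)/y}(\alpha,\rho+1)$, together with a non-incomplete Beta integral $B(\rho+1,\alpha+1)=\frac{\Gamma(\rho)\Gamma(\alpha+1)}{\Gamma(\rho+1+\alpha)}$ (up to the $\rho$-normalization) accounting for the $\sum_\rho y^\rho\frac{\Gamma(\rho)\Gamma(\alpha+1)}{\Gamma(\rho+1+\alpha)}$ term; the boundary term from $\psi(\lfloor x\rfloor)$-expansion carrying the $-\sum_\rho\lfloor x\rfloor^\rho/\rho$ factor picks up the prefactor $(1-\lfloor x\rfloor/y)^{\alpha}$. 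The third integral, after partial fractions $\log(1-1/t^2)=\log(1-1/t)+\log(1+1/t)$, i.e. $\log\frac{t-1}{t}+\log\frac{t+1}{t}$, and the same change of variables, produces exactly the three pairs of $B_{\cdot}(\alpha+1,0)$ terms that constitute $\omega(\alpha,y,x)$, while the constant $\zeta'/\zeta(0)$ and the value $\tfrac12\log(3/4)$ come from evaluating the constant part at the lower endpoint $t=2$ against $f(2)=(1-2/y)^{\alpha}$.

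For the truncated version one simply replaces Lemma \ref{lem:scambio formula expl} by its finite-$T$ counterpart: the explicit formula $\psi(t)=t-\sum_{|\gamma|\le T}t^{\rho}/\rho - \zeta'/\zeta(0)-\tfrac12\log(1-1/t^2)+O\!\big(\frac{t\log^2(tT)}{T}\big)$ (valid uniformly, after a standard smoothing, for $t$ in a bounded range avoiding prime powers — and the contribution of the excluded $t$ is negligible after integrating), substituted into $S(x,y,\alpha)=f(\lfloor x\rfloor)\psi(\lfloor x\rfloor)-f(2)\psi(2)-\int_2^{\lfloor x\rfloor}\psi(t)f'(t)\,dt-\tfrac{\Lambda(\lfloor x\rfloor)}{2}f(\lfloor x\rfloor)$. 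The error from the boundary term $f(\lfloor x\rfloor)O(\lfloor x\rfloor\log^2(\lfloor x\rfloor T)/T)$ gives the $\frac{\lfloor x\rfloor\log^2(\lfloor x\rfloor T)}{T}$ summand, and the error from $\int_2^{\lfloor x\rfloor}|f'(t)|\cdot\frac{t\log^2(tT)}{T}\,dt$, bounded using $|f'(t)|\le\frac{|\alpha|}{y}(1-t/y)^{\mathrm{Re}(\alpha)-1}$ and $(1-t/y)^{\mathrm{Re}(\alpha)-1}\le M(\alpha,y,x)$ on $[2,\lfloor x\rfloor]$ (monotone in $t$: decreasing if $\mathrm{Re}(\alpha)\ge 1$ so its sup is at $t=2$, bounded by $1$; increasing if $0<\mathrm{Re}(\alpha)<1$ so its sup is at $t=\lfloor x\rfloor$, giving $(1-\lfloor x\rfloor/y)^{\mathrm{Re}(\alpha)-1}$), yields the $M(\alpha,y,x)\frac{|\alpha|\lfloor x\rfloor^2\log^2(\lfloor x\rfloor T)}{yT}$ summand after a trivial $\int_2^{\lfloor x\rfloor}t\,dt\ll\lfloor x\rfloor^2$.

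I expect the main obstacle to be the careful bookkeeping in the second and third integral evaluations: getting the arguments and the $(a,b)$-ordering of every incomplete Beta function right (the asymmetry $B_{2/y}(\rho+1,\alpha)$ versus $B_{(y-\lfloor x\rfloor)/y}(\alpha,\rho+1)$ comes from which endpoint of the $v$-integral one expands about, using the identity $B_z(a,b)+B_{1-z}(b,a)=B(a,b)$ to route each boundary contribution), and in particular tracking how the three logarithmic pieces in $\log(1-1/t^2)$, each producing a shifted substitution $1-t/y$ versus $1-t/(y\pm 1)$ scaled appropriately, assemble into the stated $\omega(\alpha,y,x)$ with its $(y-1)^{\alpha}/2y^{\alpha}$ and $(y+1)^{\alpha}/2y^{\alpha}$ prefactors. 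The analytic content — partial summation, Lemma \ref{lem:scambio formula expl}, and the standard truncated explicit formula — is routine; the difficulty is purely the algebraic identification of special-function terms.
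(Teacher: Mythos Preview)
Your approach is essentially the paper's: Abel summation to reach
\[
\sum_{n\le\lfloor x\rfloor}\Lambda(n)\Bigl(1-\tfrac{n}{y}\Bigr)^{\alpha}
=\psi(\lfloor x\rfloor)\Bigl(1-\tfrac{\lfloor x\rfloor}{y}\Bigr)^{\alpha}
+\frac{\alpha}{y}\int_{2}^{\lfloor x\rfloor}\psi(t)\Bigl(1-\tfrac{t}{y}\Bigr)^{\alpha-1}dt,
\]
then Lemma~\ref{lem:scambio formula expl} to expand the integral, with each of the four pieces $t$, $-\sum_\rho t^\rho/\rho$, $-\zeta'/\zeta(0)$, $-\tfrac12\log(1-1/t^2)$ reduced to incomplete Beta integrals by the substitution $u=t/y$ (resp.\ $s=(y-t)/y$, $(y-t)/(y\pm1)$). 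Two points of caution. First, your boundary term $-f(2)\psi(2)$ is spurious: the lower Stieltjes limit is $2^{-}$, where $\psi$ vanishes, so this term is zero; also, the $\tfrac{\Lambda(\lfloor x\rfloor)}{2}$ contribution does not come from any ``symmetric normalization'' of the sum but from replacing $\psi(\lfloor x\rfloor)$ by $\psi_0(\lfloor x\rfloor)+\tfrac{\Lambda(\lfloor x\rfloor)}{2}$ before applying the explicit formula.

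Second, and more substantively, in the truncated version the explicit formula carries \emph{two} error terms,
\[
O\!\left(\frac{t\log^{2}(tT)}{T}\right)\quad\text{and}\quad O\!\left(\log(t)\min\Bigl(1,\tfrac{t}{T\langle t\rangle}\Bigr)\right),
\]
and the second cannot be absorbed into the first by ``avoiding prime powers''. The paper devotes most of its proof of the truncated statement to integrating this second term against $\frac{|\alpha|}{y}(1-t/y)^{\mathrm{Re}(\alpha)-1}$: it partitions $(2,\lfloor x\rfloor)$ into $\Omega_1=\{\langle t\rangle\ge 1\}$, $\Omega_2=\{t/(T\langle t\rangle)\ge 1\}$, and the remainder $\Omega_3$, and bounds each piece separately (the $\Omega_3$ piece requires splitting further according to whether $p^m\pm1$ is itself a prime power). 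Only after this work does one recover the stated bound $M(\alpha,y,x)\,|\alpha|\lfloor x\rfloor^{2}\log^{2}(\lfloor x\rfloor T)/(yT)$. Your phrase ``negligible after integrating'' is exactly the content that needs to be supplied.
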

\begin{proof}
By the Abel summation formula we have
\begin{equation}
\sum_{n\leq\left\lfloor x\right\rfloor }\Lambda\left(n\right)\left(1-\frac{n}{y}\right)^{\alpha}=\psi\left(\left\lfloor x\right\rfloor \right)\left(1-\frac{\left\lfloor x\right\rfloor }{y}\right)^{\alpha}+\frac{\alpha}{y}\int_{2}^{\left\lfloor x\right\rfloor }\psi\left(t\right)\left(1-\frac{t}{y}\right)^{\alpha-1}dt\label{eq:lemma fundment riga 1}
\end{equation}
where $\left\lfloor x\right\rfloor $ is the floor function, so by
the explicit formula for $\psi_{0}\left(t\right)=\psi\left(t\right)-\frac{\Lambda\left(t\right)}{2}$
(or by Lemma \ref{lem:scambio formula expl}) we have
\begin{align}
\sum_{n\leq\left\lfloor x\right\rfloor }\Lambda\left(n\right)\left(1-\frac{n}{y}\right)^{\alpha}= & \psi\left(\left\lfloor x\right\rfloor \right)\left(1-\frac{\left\lfloor x\right\rfloor }{y}\right)^{\alpha}\nonumber \\
+ & \frac{\alpha}{y}\int_{2}^{\left\lfloor x\right\rfloor }\left(t-\sum_{\rho}\frac{t^{\rho}}{\rho}-\frac{\zeta^{\prime}}{\zeta}\left(0\right)-\frac{1}{2}\log\left(1-\frac{1}{t^{2}}\right)\right)\left(1-\frac{t}{y}\right)^{\alpha-1}dt\nonumber \\
= & \psi\left(\left\lfloor x\right\rfloor \right)\left(1-\frac{\left\lfloor x\right\rfloor }{y}\right)^{\alpha}+\frac{\alpha}{y}\sum_{w=1}^{4}\int_{2}^{\left\lfloor x\right\rfloor }g_{w}\left(t\right)\left(1-\frac{t}{y}\right)^{\alpha-1}dt\label{eq:psi floor}
\end{align}
where $g_{w}\left(t\right)$ are the terms of the explicit formula
of $\psi_{0}\left(t\right).$

$\ $

\textbf{Integral of $g_{1}\left(t\right)$}

We have to calculate
\[
\frac{\alpha}{y}\int_{2}^{\left\lfloor x\right\rfloor }g_{1}\left(t\right)\left(1-\frac{t}{y}\right)^{\alpha-1}dt=\frac{\alpha}{y}\int_{2}^{\left\lfloor x\right\rfloor }t\left(1-\frac{t}{y}\right)^{\alpha-1}dt
\]
then taking $t/y=u$ and integrating by parts we get
\[
\frac{\alpha}{y}\int_{2}^{\left\lfloor x\right\rfloor }g_{1}\left(t\right)\left(1-\frac{t}{y}\right)^{\alpha-1}dt=y\alpha\int_{2/y}^{\left\lfloor x\right\rfloor /y}u\left(1-u\right)^{\alpha-1}du
\]
\[
=\frac{y}{\alpha+1}\left(\left(1-\frac{2}{y}\right)^{\alpha+1}-\left(1-\frac{\left\lfloor x\right\rfloor }{y}\right)^{\alpha+1}\right)-\left\lfloor x\right\rfloor \left(1-\frac{\left\lfloor x\right\rfloor }{y}\right)^{\alpha}+2\left(1-\frac{2}{y}\right)^{\alpha}.
\]

$\ $

\textbf{Integral of $g_{2}\left(t\right)$}

We have to estimate
\[
\frac{\alpha}{y}\int_{2}^{\left\lfloor x\right\rfloor }g_{2}\left(t\right)\left(1-\frac{t}{y}\right)^{\alpha-1}dt=-\frac{\alpha}{y}\int_{2}^{\left\lfloor x\right\rfloor }\sum_{\rho}\frac{t^{\rho}}{\rho}\left(1-\frac{t}{y}\right)^{\alpha-1}dt
\]
then by Lemma \ref{lem:scambio formula expl} we we know that we can
exchange the integral with the series so
\[
\frac{\alpha}{y}\int_{2}^{\left\lfloor x\right\rfloor }g_{2}\left(t\right)\left(1-\frac{t}{y}\right)^{\alpha-1}dt=-\frac{\alpha}{y}\sum_{\rho}\frac{1}{\rho}\int_{2}^{\left\lfloor x\right\rfloor }t^{\rho}\left(1-\frac{t}{y}\right)^{\alpha-1}dt=-\alpha\sum_{\rho}\frac{y^{\rho}}{\rho}\int_{2/y}^{\left\lfloor x\right\rfloor /y}u^{\rho}\left(1-u\right)^{\alpha-1}du
\]
\[
=-\sum_{\rho}y^{\rho}\frac{\Gamma\left(\rho\right)\Gamma\left(\alpha+1\right)}{\Gamma\left(\rho+1+\alpha\right)}+\alpha\sum_{\rho}\frac{y^{\rho}}{\rho}\left(B_{2/y}\left(\rho+1,\alpha\right)+B_{(y-\left\lfloor x\right\rfloor )/y}\left(\alpha,\rho+1\right)\right)
\]
where $B_{z}\left(a,b\right)$ is the incomplete Beta function (for
details see for example \cite{Olver}, chapter $8.17$). Note that
the last identity is valid since $\sum_{\rho}y^{\rho}\frac{\Gamma\left(\rho\right)}{\Gamma\left(\rho+1+\alpha\right)}$
is absolutely and compactly convergent for $\mathrm{Re}\left(\alpha\right)>0$
(see \cite{Perelli}). 

$\ $

\textbf{Integral of $g_{3}\left(t\right)$}

Trivially we have
\[
\frac{\alpha}{y}\int_{2}^{\left\lfloor x\right\rfloor }g_{3}\left(t\right)\left(1-\frac{t}{y}\right)^{\alpha-1}dt=-\frac{\zeta^{\prime}}{\zeta}\left(0\right)\frac{\alpha}{y}\int_{2}^{\left\lfloor x\right\rfloor }\left(1-\frac{t}{y}\right)^{\alpha-1}dt=\frac{\zeta^{\prime}}{\zeta}\left(0\right)\left(1-\frac{\left\lfloor x\right\rfloor }{y}\right)^{\alpha}-\frac{\zeta^{\prime}}{\zeta}\left(0\right)\left(1-\frac{2}{y}\right)^{\alpha}.
\]

$\ $

\textbf{Integral of $g_{4}\left(t\right)$}

Integrating by parts we have that
\[
\frac{\alpha}{y}\int_{2}^{\left\lfloor x\right\rfloor }g_{4}\left(t\right)\left(1-\frac{t}{y}\right)^{\alpha-1}dt=-\frac{\alpha}{2y}\int_{2}^{\left\lfloor x\right\rfloor }\log\left(1-\frac{1}{t^{2}}\right)\left(1-\frac{t}{y}\right)^{\alpha-1}dt
\]
\[
=\frac{1}{2}\log\left(1-\frac{1}{\left\lfloor x\right\rfloor ^{2}}\right)\left(1-\frac{\left\lfloor x\right\rfloor }{y}\right)^{\alpha}-\frac{1}{2}\log\left(\frac{3}{4}\right)\left(1-\frac{2}{y}\right)^{\alpha}
\]
\begin{equation}
+\frac{1}{y^{\alpha}}\int_{2}^{\left\lfloor x\right\rfloor }t^{-1}\left(y-t\right)^{\alpha}dt-\frac{1}{2y^{\alpha}}\int_{2}^{\left\lfloor x\right\rfloor }\left(t+1\right)^{-1}\left(y-t\right)^{\alpha}dt-\frac{1}{2y^{\alpha}}\int_{2}^{\left\lfloor x\right\rfloor }\left(t-1\right)^{-1}\left(y-t\right)^{\alpha}dt.\label{eq:contacci}
\end{equation}
Now we calculate explicitly only the first integral of \eqref{eq:contacci}
since the other are similar. Taking $\frac{y-t}{y}=s$ we get
\[
\frac{1}{y^{\alpha}}\int_{2}^{\left\lfloor x\right\rfloor }t^{-1}\left(y-t\right)^{\alpha}dt=\int_{\left(y-\left\lfloor x\right\rfloor \right)/y}^{\left(y-2\right)/y}\left(1-s\right)^{-1}s^{\alpha}ds
\]
\[
=B_{\left(y-2\right)/y}\left(\alpha+1,0\right)-B_{\left(y-\left\lfloor x\right\rfloor \right)/y}\left(\alpha+1,0\right)
\]
then, arguing in this way for all the integrals in \eqref{eq:contacci}
and expanding $\psi\left(\left\lfloor x\right\rfloor \right)$ with
its explicit formula, we get the thesis.

For the proof of the truncated version we use the formula 
\begin{equation}
\psi_{0}\left(x\right)=x-\sum_{\rho:\,\left|\gamma\right|\leq T}\frac{x^{\rho}}{\rho}-\frac{\zeta^{\prime}}{\zeta}\left(0\right)-\frac{\log\left(1-1/x^{2}\right)}{2}+O\left(\frac{x\log^{2}\left(Tx\right)}{T}+\log\left(x\right)\min\left(1,\frac{x}{T\left\langle x\right\rangle }\right)\right),\,T>2,\,x>1\label{troncata}
\end{equation}
(see for example \cite{Daven}, formulae $(9)$ and $(10)$ at page
$109$) where $\left\langle x\right\rangle $ is the distance of $x$
to the nearest prime power other than $x$, then, substituting \eqref{troncata}
in \eqref{eq:lemma fundment riga 1}, we can see that the problem
boils down to evaluate
\begin{equation}
\frac{\left|\alpha\right|}{yT}\int_{2}^{\left\lfloor x\right\rfloor }t\log^{2}\left(tT\right)\left(1-\frac{t}{y}\right)^{\mathrm{Re}\left(\alpha\right)-1}dt\label{eq:int error 1}
\end{equation}
and
\begin{equation}
\frac{\left|\alpha\right|}{y}\int_{2}^{\left\lfloor x\right\rfloor }\log\left(t\right)\min\left(1,\frac{t}{T\left\langle t\right\rangle }\right)\left(1-\frac{t}{y}\right)^{\mathrm{Re}\left(\alpha\right)-1}dt.\label{eq:int error 2}
\end{equation}
Let us define
\[
M\left(\alpha,y,x\right)=\begin{cases}
1, & \mathrm{Re}\left(\alpha\right)\geq1\\
\left(1-\frac{\left\lfloor x\right\rfloor }{y}\right)^{\mathrm{Re}\left(\alpha\right)-1}, & 0<\mathrm{Re}\left(\alpha\right)<1.
\end{cases}
\]
Then
\[
\frac{\left|\alpha\right|}{yT}\int_{2}^{\left\lfloor x\right\rfloor }t\log^{2}\left(tT\right)\left(1-\frac{t}{y}\right)^{\mathrm{Re}\left(\alpha\right)-1}dt\ll\frac{\left|\alpha\right|}{yT}M\left(\alpha,y,x\right)\int_{2}^{\left\lfloor x\right\rfloor }t\log^{2}\left(tT\right)dt
\]
\begin{equation}
\ll M\left(\alpha,y,x\right)\frac{\left|\alpha\right|\left\lfloor x\right\rfloor ^{2}\log^{2}\left(\left\lfloor x\right\rfloor T\right)}{yT}.\label{eq:int error term 1}
\end{equation}
We now analyze \eqref{eq:int error 2}. Let us define
\[
\Omega_{1}=\left\{ t\in\left(2,\left\lfloor x\right\rfloor \right):\,\left\langle t\right\rangle \geq1\right\} 
\]
then we can easily see that
\[
\frac{\left|\alpha\right|}{y}\int_{\Omega_{1}}\log\left(t\right)\min\left(1,\frac{t}{T\left\langle t\right\rangle }\right)\left(1-\frac{t}{y}\right)^{\mathrm{Re}\left(\alpha\right)-1}dt\ll\frac{\left|\alpha\right|}{yT}M\left(\alpha,y,x\right)\int_{2}^{\left\lfloor x\right\rfloor }t\log\left(t\right)dt
\]
\[
\ll M\left(\alpha,y,x\right)\frac{\left|\alpha\right|\left\lfloor x\right\rfloor ^{2}\log\left(\left\lfloor x\right\rfloor \right)}{yT}
\]
Now we consider the set
\[
\Omega_{2}=\bigcup_{p,m:\,p^{m}\leq\left\lfloor x\right\rfloor }\left\{ t\in\left(2,\left\lfloor x\right\rfloor \right):\,\frac{p^{m}T}{T+1}\leq t\leq\frac{p^{m}T}{T-1},\,t\neq p^{m}\right\} 
\]
where $p$ runs over primes and $m\geq1$ are integers. We also assume
that, if $p^{m}=2$ or $p^{m}=\left\lfloor x\right\rfloor $, then
the intervals to consider are 
\[
2<t\leq\frac{2T}{T-1},\frac{\left\lfloor x\right\rfloor T}{T+1}\leq t<\left\lfloor x\right\rfloor ,
\]
respectively. So we can observe that
\[
t\in\Omega_{2}\Leftrightarrow\frac{t}{T\left|t-p^{m}\right|}\geq1
\]
then
\[
\frac{\left|\alpha\right|}{y}\int_{\Omega_{2}}\log\left(t\right)\min\left(1,\frac{t}{T\left\langle t\right\rangle }\right)\left(1-\frac{t}{y}\right)^{\mathrm{Re}\left(\alpha\right)-1}dt\ll\frac{\left|\alpha\right|}{y}M\left(\alpha,y,x\right)\sum_{p,m:\,p^{m}\leq\left\lfloor x\right\rfloor }\int_{p^{m}T/\left(T+1\right)}^{p^{m}T/\left(T-1\right)}\log\left(t\right)dt
\]
\[
\ll\frac{\left|\alpha\right|}{yT}M\left(\alpha,y,x\right)\sum_{p,m:\,p^{m}\leq\left\lfloor x\right\rfloor }p^{m}\log\left(p^{m}\right)\ll M\left(\alpha,y,x\right)\frac{\left|\alpha\right|\left\lfloor x\right\rfloor ^{2}}{yT}.
\]
Now we take
\[
\Omega_{3}=\left(2,\left\lfloor x\right\rfloor \right)^{*}\setminus\left\{ \Omega_{1}\cup\Omega_{2}\right\} 
\]
where
\[
\left(2,\left\lfloor x\right\rfloor \right)^{*}=\left(2,\left\lfloor x\right\rfloor \right)\setminus\left\{ m\geq1,\,p\,\mathrm{prime}:\,2<p^{m}<\left\lfloor x\right\rfloor \right\} .
\]
Obviously we can observe that if $t\in\Omega_{3}$ then we have to
consider the intervals
\begin{equation}
p^{m}-1<t<\frac{p^{m}T}{T+1},\label{eq:primi caso 1}
\end{equation}
\begin{equation}
\frac{p^{m}T}{T-1}<t<p^{m}+1\label{primi caso 2}
\end{equation}
if $p^{m}-1$ or $p^{m}+1$ are not prime powers, respectively and
\begin{equation}
\frac{\left(p^{m}-1\right)T}{T-1}<t<\frac{p^{m}T}{T+1},\label{primi caso 3}
\end{equation}
\begin{equation}
\frac{p^{m}T}{T-1}<t<\frac{\left(p^{m}+1\right)T}{T+1}\label{primi caso 4}
\end{equation}
if $p^{m}-1$ or $p^{m}+1$ are prime powers, respectively. If \eqref{eq:primi caso 1}
holds then
\[
\frac{\left|\alpha\right|}{yT}\int_{p^{m}-1}^{p^{m}T/\left(T+1\right)}\frac{t\log\left(t\right)}{p^{m}-t}dt\ll\frac{\left|\alpha\right|}{yT}p^{m}\log\left(p^{m}\right)\log\left(\frac{p^{m}}{T}\right)
\]
and the same bound holds for \eqref{primi caso 2}. If \eqref{primi caso 3}
holds then
\[
\frac{\left|\alpha\right|}{yT}\int_{\left(p^{m}-1\right)T/\left(T-1\right)}^{p^{m}T/\left(T+1\right)}\frac{t\log\left(t\right)}{\min\left(p^{m}-t,t-p^{m}+1\right)}dt\ll\frac{\left|\alpha\right|}{yT}p^{m}\log\left(p^{m}\right)\log\left(\frac{p^{m}}{T}\right)
\]
and the same holds for \eqref{primi caso 4}.

Summing up, we get
\[
\frac{\left|\alpha\right|}{y}\int_{\Omega_{3}}\log\left(t\right)\min\left(1,\frac{t}{T\left\langle t\right\rangle }\right)\left(1-\frac{t}{y}\right)^{\mathrm{Re}\left(\alpha\right)-1}dt\ll M\left(\alpha,y,x\right)\frac{\left|\alpha\right|\left\lfloor x\right\rfloor ^{2}\log^{2}\left(\left\lfloor x\right\rfloor T\right)}{yT}
\]
so finally we can write
\begin{equation}
\frac{\left|\alpha\right|}{y}\int_{2}^{\left\lfloor x\right\rfloor }\log\left(t\right)\min\left(1,\frac{t}{T\left\langle t\right\rangle }\right)\left(1-\frac{t}{y}\right)^{\mathrm{Re}\left(\alpha\right)-1}dt\ll M\left(\alpha,y,x\right)\frac{\left|\alpha\right|\left\lfloor x\right\rfloor ^{2}\log^{2}\left(T\left\lfloor x\right\rfloor \right)}{yT}.\label{int error term 2}
\end{equation}
To finish the proof we have only to substitute the $\psi\left(\left\lfloor x\right\rfloor \right)$
term in \eqref{eq:psi floor} with \eqref{troncata}, recalling that,
if $x$ is an integer, then the error term in \eqref{troncata} can
be written as $O\left(x\log^{2}\left(xT\right)/T\right)$ since $\left\langle x\right\rangle \geq1$.
\end{proof}

\section{Proof of Theorem 1}

Let $N>2$ be an integer and let $\psi\left(N\right)=\sum_{n\leq N}\Lambda\left(n\right)$
the Chebyshev psi function. From the identity
\[
\left(\sum_{m=0}^{k}a_{m}\right)\left(\sum_{m=0}^{k}b_{m}\right)=\sum_{m=0}^{2k}\sum_{h=0}^{m}a_{h}b_{m-h}-\sum_{m=0}^{k-1}\left(a_{m}\sum_{h=k+1}^{2k-m}b_{h}+b_{m}\sum_{h=k+1}^{2k-m}a_{h}\right),
\]
which can be proved observing that the set of lattice points 
\[
\left\{ \left(i,j\right):\,0\leq i+j\leq2N,\,0\leq i\leq N,\,0\leq j\leq N\right\} 
\]
forms a triangle that can be seen as a $N\times N$ square joint the
two triangles
\[
\left\{ \left(i,j\right):\,0\leq i\leq N-1,\,N+1\leq j\leq2N-i\right\} 
\]
and
\[
\left\{ \left(i,j\right):\,N+1\leq i\leq2N,\,0\leq j\leq2N-i\right\} ,
\]
we have that
\[
\sum_{n\leq N}\Lambda\left(n\right)\sum_{m\leq N}\Lambda\left(m\right)=\sum_{n\leq2N}\underset{{\scriptstyle m_{1}+m_{2}=n}}{\sum_{m_{1},m_{2}\leq n}}\Lambda\left(m_{1}\right)\Lambda\left(m_{2}\right)-2\sum_{n\leq N-1}\left(\Lambda\left(n\right)\sum_{m=N+1}^{2N-n}\Lambda\left(m\right)\right)
\]
then
\begin{align}
\sum_{n\leq2N}r_{G}\left(n\right)= & \psi^{2}\left(N\right)+2\sum_{n\leq N-1}\Lambda\left(n\right)\left(\psi\left(2N-n\right)-\psi\left(N\right)\right)\nonumber \\
= & 2\sum_{n\leq N}\Lambda\left(n\right)\psi\left(2N-n\right)-\psi\left(N\right)^{2}\label{eq:identit=0000E0 base}
\end{align}
so we will find the explicit formula for $\sum_{n\leq2N}r_{G}\left(n\right)$
using the classical explicit formula of $\psi_{0}\left(N\right)=\psi\left(N\right)-\frac{\Lambda\left(N\right)}{2}$
(for a reference see \cite{Daven}, chapter $17$). It is quite simple
to observe that the most delicate term to evaluate is 
\begin{equation}
2\sum_{n\leq N}\Lambda\left(n\right)\psi\left(2N-n\right).\label{eq:somma contosa}
\end{equation}
From the explicit formula of $\psi_{0}\left(N\right)$ we have that
\[
\psi\left(2N-n\right)=2N-n-\sum_{\rho}\frac{\left(2N-n\right)^{\rho}}{\rho}-\frac{\zeta^{\prime}}{\zeta}\left(0\right)-\frac{1}{2}\log\left(1-\frac{1}{\left(2N-n\right)^{2}}\right)+\frac{\Lambda\left(2N-n\right)}{2}
\]
so we now evaluate \eqref{eq:somma contosa} term by term.

\subsection{The main term}

From the Abel summation formula we have
\begin{equation}
2\sum_{n\leq N}\Lambda\left(n\right)\left(2N-n\right)=2N\psi\left(N\right)+2\psi_{1}\left(N\right)\label{pezzo con psi 1}
\end{equation}
where
\begin{equation}
\psi_{1}\left(N\right)=\int_{0}^{N}\psi\left(t\right)dt\label{psi 1}
\end{equation}
so from Theorem 28 of \cite{Ing} we get
\begin{equation}
2\sum_{n\leq N}\Lambda\left(n\right)\left(2N-n\right)=2N\psi\left(N\right)+N^{2}-2\sum_{\rho}\frac{N^{\rho+1}}{\rho\left(\rho+1\right)}-2N\frac{\zeta^{\prime}}{\zeta}\left(0\right)+2\frac{\zeta^{\prime}}{\zeta}\left(-1\right)-\sum_{r\geq1}\frac{N^{-2r+1}}{r\left(2r-1\right)}.\label{eq:formula termine principale}
\end{equation}
Furthermore it is not difficult to see that, expanding $2N\psi\left(N\right)$,
we have the asymptotic formula
\begin{equation}
2\sum_{n\leq N}\Lambda\left(n\right)\left(2N-n\right)=3N^{2}-2\sum_{\rho}\frac{N^{\rho+1}}{\rho\left(\rho+1\right)}+2\sum_{\rho}\frac{N^{\rho+1}}{\rho}+N\Lambda\left(N\right)+O\left(N\right)\label{eq:formula termine principale troncata}
\end{equation}
as $N\rightarrow\infty.$ 

\subsection{The term involving the series over the non-trivial zeros of $\zeta\left(s\right)$}

We now consider
\[
-2\sum_{n\leq N}\Lambda\left(n\right)\sum_{\rho}\frac{\left(2N-n\right)^{\rho}}{\rho}=-2\sum_{\rho}\frac{\left(2N\right)^{\rho}}{\rho}\sum_{n\leq N}\Lambda\left(n\right)\left(1-\frac{n}{2N}\right)^{\rho}
\]
then from Lemma \ref{lemma fondamentale}, taking $x=N,\,y=2N,\,\alpha=\rho$,
we get
\begin{align}
-2\sum_{n\leq N}\Lambda\left(n\right)\sum_{\rho}\frac{\left(2N-n\right)^{\rho}}{\rho}= & -2\sum_{\rho}\frac{\left(2N-2\right)^{\rho+1}}{\rho\left(\rho+1\right)}+2\sum_{\rho}\frac{N^{\rho+1}}{\rho\left(\rho+1\right)}-4\sum_{\rho}\frac{\left(2N-2\right)^{\rho}}{\rho}\label{eq:serie zeri1}\\
+ & 2\sum_{\rho_{1}}\left(2N\right)^{\rho_{1}}\left(\Gamma\left(\rho_{1}\right)\sum_{\rho_{2}}\frac{\left(2N\right)^{\rho_{2}}\Gamma\left(\rho_{2}\right)}{\Gamma\left(\rho_{1}+\rho_{2}+1\right)}\right.\label{eq:serie zeri2}\\
- & \left.\sum_{\rho_{2}}\frac{\left(2N\right)^{\rho_{2}}}{\rho_{2}}\left(B_{1/N}\left(\rho_{2}+1,\rho_{1}\right)+B_{1/2}\left(\rho_{1},\rho_{2}+1\right)\right)\right)\label{eq:serie zeri 5}\\
+ & 2\sum_{\rho_{1}}\frac{N^{\rho_{1}}}{\rho_{1}}\sum_{\rho_{2}}\frac{N^{\rho_{2}}}{\rho_{2}}+2\frac{\zeta^{\prime}}{\zeta}\left(0\right)\sum_{\rho}\frac{\left(2N-2\right)^{\rho}}{\rho}\label{eq:serie zeri3}\\
+ & \log\left(\frac{3}{4}\right)\sum_{\rho}\frac{\left(2N-2\right)^{\rho}}{\rho}-2\sum_{\rho}\frac{\left(2N\right)^{\rho}}{\rho}\omega\left(\rho,2N,N\right)-\Lambda\left(N\right)\sum_{\rho}\frac{N^{\rho}}{\rho}\label{serie zeri4}
\end{align}
where 
\begin{align*}
\omega\left(\rho,2N,N\right)= & -\frac{1}{2}\left(1-\frac{1}{2N}\right)^{\rho}\left(B_{(2N-2)/(2N-1)}\left(\rho+1,0\right)-B_{N/(2N-1)}\left(\rho+1,0\right)\right)\\
- & \frac{1}{2}\left(1+\frac{1}{2N}\right)^{\rho}\left(B_{(2N-2)/(2N+1)}\left(\rho+1,0\right)-B_{N/(2N+1)}\left(\rho+1,0\right)\right)\\
+ & B_{(2N-2)/2N}\left(\rho+1,0\right)-B_{1/2}\left(\rho+1,0\right).
\end{align*}
We can observe that the rearrangement in \eqref{eq:serie zeri1},
\eqref{eq:serie zeri2}, \eqref{eq:serie zeri 5}, \eqref{eq:serie zeri3}
and \eqref{serie zeri4} is legitimate: in \eqref{eq:serie zeri1},
\eqref{eq:serie zeri3} and \eqref{serie zeri4} the series are convergent
by the explicit formula of $\psi\left(N\right)$ (that is, in the
sense $\sum_{\rho}x^{\rho}/\rho=\lim_{T\rightarrow\infty}\sum_{\rho:\,\left|\gamma\right|\leq T}x^{\rho}/\rho$)
and $\sum_{\rho}\frac{\left(2N\right)^{\rho}}{\rho}\omega\left(\rho,2N,N\right)$
is convergent since, integrating by parts, we have, for all $0\leq h<1$,
that
\begin{equation}
\sum_{\rho}\frac{\left(2N\right)^{\rho}}{\rho}\left(1-\frac{1}{2N}\right)^{\rho}B_{h}\left(\rho+1,0\right)=\sum_{\rho}\frac{\left(2N\right)^{\rho}}{\rho}\left(1-\frac{1}{2N}\right)^{\rho}\int_{0}^{h}\frac{t^{\rho}}{1-t}dt\label{eq:serie zeri e beta inc1}
\end{equation}
\begin{equation}
=\sum_{\rho}\frac{\left(2N-1\right)^{\rho}}{\rho\left(\rho+1\right)}\left.\frac{t^{\rho+1}}{1-t}\right|_{0}^{h}+\sum_{\rho}\frac{\left(2N-1\right)^{\rho}}{\rho\left(\rho+1\right)}\int_{0}^{h}\frac{t^{\rho+1}}{\left(1-t\right)^{2}}dt\label{eq:serie zeri e beta 2}
\end{equation}
and so the convergence. This allow us to conclude that the double
series in \eqref{eq:serie zeri2} and \eqref{eq:serie zeri 5} is
convergent.

Now we want to give an estimation of some terms of \eqref{eq:serie zeri1},
\eqref{eq:serie zeri2}, \eqref{eq:serie zeri 5}, \eqref{eq:serie zeri3}
and \eqref{serie zeri4}. We start from the term
\[
\sum_{\rho}\frac{\left(2N-2\right)^{\rho}}{\rho}.
\]
Then, by the well known asymptotic
\begin{equation}
\psi\left(x\right)-x\ll\begin{cases}
x\exp\left(-C\sqrt{\log\left(x\right)}\right) & \mathrm{without}\,\mathrm{RH}\\
\sqrt{x}\log^{2}\left(x\right) & \mathrm{with}\,\mathrm{RH},
\end{cases}\,C>0,\,x>1\label{eq:stima zeri classica}
\end{equation}
(see \cite{Daven}, chapter $18$) we obtain
\[
\sum_{\rho}\frac{\left(2N-2\right)^{\rho}}{\rho}\ll\begin{cases}
N\exp\left(-C\sqrt{\log\left(N\right)}\right) & \mathrm{without}\,\mathrm{RH}\\
\sqrt{N}\log^{2}\left(N\right) & \mathrm{with}\,\mathrm{RH}
\end{cases}
\]
where $C>0$ is a real number. 

Now let us consider the terms 
\[
-2\frac{\zeta^{\prime}}{\zeta}\left(0\right)\sum_{\rho}\frac{\left(2N-2\right)^{\rho}}{\rho}+\log\left(\frac{3}{4}\right)\sum_{\rho}\frac{\left(2N-2\right)^{\rho}}{\rho}.
\]
We can easily see that the estimation
\[
-2\frac{\zeta^{\prime}}{\zeta}\left(0\right)\sum_{\rho}\frac{\left(2N-2\right)^{\rho}}{\rho}+\log\left(\frac{3}{4}\right)\sum_{\rho}\frac{\left(2N-2\right)^{\rho}}{\rho}\ll\begin{cases}
N\exp\left(-C\sqrt{\log\left(N\right)}\right) & \mathrm{without}\,\mathrm{RH}\\
\sqrt{N}\log^{2}\left(N\right) & \mathrm{with}\,\mathrm{RH}
\end{cases}
\]
holds. 

We now estimate the series
\[
\sum_{\rho}\frac{\left(2N\right)^{\rho}}{\rho}\omega\left(\rho,2N,N\right).
\]
We will consider only
\[
\sum_{\rho}\frac{\left(2N\right)^{\rho}}{\rho}\left(1-\frac{1}{2N}\right)^{\rho}\left(B_{(2N-2)/(2N-1)}\left(\rho+1,0\right)-B_{N/(2N-1)}\left(\rho+1,0\right)\right)
\]
since the other calculations are essentially the same. From the definition
of incomplete Beta function we have
\[
\sum_{\rho}\frac{\left(2N\right)^{\rho}}{\rho}\left(1-\frac{1}{2N}\right)^{\rho}\left(B_{(2N-2)/(2N-1)}\left(\rho+1,0\right)-B_{N/(2N-1)}\left(\rho+1,0\right)\right)
\]
\[
=\sum_{\rho}\frac{\left(2N-1\right)^{\rho}}{\rho}\int_{N/(2N-1)}^{(2N-2)/(2N-1)}\frac{t^{\rho}}{1-t}dt=\int_{N/(2N-1)}^{(2N-2)/(2N-1)}\frac{1}{1-t}\sum_{\rho}\frac{\left(2N-1\right)^{\rho}t^{\rho}}{\rho}dt
\]
since we know that we can exchange the series over the non-trivial
zeros and the integral. Now from \eqref{eq:stima zeri classica} we
get
\begin{equation}
\sum_{\rho}\frac{\left(2N\right)^{\rho}}{\rho}\left(1-\frac{1}{2N}\right)^{\rho}\left(B_{(2N-2)/(2N-1)}\left(\rho+1,0\right)-B_{N/(2N-1)}\left(\rho+1,0\right)\right)\label{eq:conv assoluta beta incompl}
\end{equation}
\[
\ll\begin{cases}
\left(2N-1\right)\int_{N/(2N-1)}^{(2N-2)/(2N-1)}t\exp\left(-C\sqrt{\log\left(t\left(2N-1\right)\right)}\right)/\left(1-t\right)dt & \mathrm{without}\,\mathrm{RH}\\
\sqrt{\left(2N-1\right)}\int_{N/(2N-1)}^{(2N-2)/(2N-1)}\sqrt{t}\log^{2}\left(t\left(2N-1\right)\right)/\left(1-t\right)dt & \mathrm{with}\,\mathrm{RH}
\end{cases}
\]
\[
\ll\begin{cases}
N\exp\left(-C\sqrt{\log\left(N\right)}\right) & \mathrm{without}\,\mathrm{RH}\\
\sqrt{N}\log^{3}\left(N\right) & \mathrm{with}\,\mathrm{RH}
\end{cases}
\]

where $C>0$ is a real constant. Hence we can conclude that
\begin{align}
-2\sum_{n\leq N}\Lambda\left(n\right)\sum_{\rho}\frac{\left(2N-n\right)^{\rho}}{\rho}= & -2\sum_{\rho}\frac{\left(2N-2\right)^{\rho+1}}{\rho\left(\rho+1\right)}+2\sum_{\rho}\frac{N^{\rho+1}}{\rho\left(\rho+1\right)}+2\sum_{\rho_{1}}\sum_{\rho_{2}}\frac{N^{\rho_{1}+\rho_{2}}}{\rho_{1}\rho_{2}}-\Lambda\left(N\right)\sum_{\rho}\frac{N^{\rho}}{\rho}\label{eq:serie zeri1-1}\\
+ & 2\sum_{\rho_{1}}\left(2N\right)^{\rho_{1}}\left(\Gamma\left(\rho_{1}\right)\sum_{\rho_{2}}\frac{\left(2N\right)^{\rho_{2}}\Gamma\left(\rho_{2}\right)}{\Gamma\left(\rho_{1}+\rho_{2}+1\right)}\right.\label{eq:serie zeri2-1}\\
- & \left.\sum_{\rho_{2}}\frac{\left(2N\right)^{\rho_{2}}}{\rho_{2}}\left(B_{1/N}\left(\rho_{2}+1,\rho_{1}\right)+B_{1/2}\left(\rho_{1},\rho_{2}+1\right)\right)\right)\label{eq:serie zeri3-1}\\
+ & O\left(\begin{cases}
N\exp\left(-C\sqrt{\log\left(N\right)}\right) & \mathrm{without}\,\mathrm{RH}\\
\sqrt{N}\log^{3}\left(N\right). & \mathrm{with}\,\mathrm{RH}
\end{cases}\right)
\end{align}

\subsection{The constant term}

Trivially
\[
-2\sum_{n\leq N}\Lambda\left(n\right)\frac{\zeta^{\prime}}{\zeta}\left(0\right)=-2\frac{\zeta^{\prime}}{\zeta}\left(0\right)\psi\left(N\right).
\]

\subsection{The term involving the logarithmic function}

By the Abel summation formula we have 
\[
-\sum_{n\leq N}\Lambda\left(n\right)\log\left(1-\frac{1}{\left(2N-n\right)^{2}}\right)=-\psi\left(N\right)\log\left(1-\frac{1}{N^{2}}\right)-\int_{2}^{N}\frac{2\psi\left(t\right)}{\left(2N-t\right)\left(2N-t-1\right)\left(2N-t+1\right)}dt
\]

and now using again the explicit formula for $\psi\left(t\right)$
we can evaluate term by term. Trivially we have that
\begin{align*}
-2\int_{2}^{N}\frac{tdt}{\left(2N-t\right)\left(2N-t-1\right)\left(2N-t+1\right)}= & \int_{2}^{N}\frac{2t}{2N-t}dt-\int_{2}^{N}\frac{t}{2N-t-1}dt-\int_{2}^{N}\frac{t}{2N-t+1}dt\\
= & 4+8N\mathrm{arccoth}\left(1-2N\right)+2N\left(\log\left(4\right)-1\right)\\
+ & 2N-2+(4N+2)\mathrm{arccoth}\left(\frac{3N}{2-N}\right)\\
- & 2\left(2N-1\right)\log\left(\frac{N-1}{2N-3}\right).
\end{align*}
For the second term, by Lemma \ref{lem:scambio formula expl}, we
observe that we can switch the integral with the series over the non-trivial
zeros so
\begin{align}
\int_{2}^{N}\frac{2dt}{\left(2N-t\right)\left(2N-t-1\right)\left(2N-t+1\right)}\sum_{\rho}\frac{t^{\rho}}{\rho}= & \sum_{\rho}\frac{1}{\rho}\int_{2}^{N}dt\left(\frac{-2t^{\rho}}{2N-t}+\frac{t^{\rho}}{2N-t-1}-\frac{t^{\rho}}{2N-t+1}\right)\nonumber \\
= & -2\sum_{\rho}\frac{\left(2N\right)^{\rho}}{\rho}\int_{1/N}^{1/2}\frac{u^{\rho}}{1-u}du\label{eq:calcolo integrale col zeri e log}\\
+ & \sum_{\rho}\frac{\left(2N-1\right)^{\rho}}{\rho}\int_{2/(2N-1)}^{N/\left(2N-1\right)}\frac{u^{\rho}}{1-u}du\label{zeri e log 3}\\
+ & \sum_{\rho}\frac{\left(2N+1\right)^{\rho}}{\rho}\int_{2/(2N+1)}^{N/\left(2N+1\right)}\frac{u^{\rho}}{1-u}du\label{zeri e log 2}
\end{align}
and the integrals in \eqref{eq:calcolo integrale col zeri e log},
\eqref{zeri e log 3} and \eqref{zeri e log 2} are difference of
two incomplete Beta functions. Observe that this arrangement is legitimate
since, arguing as in \eqref{eq:conv assoluta beta incompl} we can
prove that the series in \eqref{eq:calcolo integrale col zeri e log},
\eqref{zeri e log 3} and \eqref{zeri e log 2} converges absolutely.
Then we obviously get
\[
-2\frac{\zeta^{\prime}}{\zeta}\left(0\right)\int_{2}^{N}\frac{dt}{\left(2N-t\right)\left(2N-t-1\right)\left(2N-t+1\right)}=\frac{\zeta^{\prime}}{\zeta}\left(0\right)\left(\log\left(1-\frac{1}{\left(2N-2\right)^{2}}\right)-\log\left(1-\frac{1}{N^{2}}\right)\right).
\]
It remains to evaluate
\begin{align}
\int_{2}^{N}\frac{\log\left(1-1/t^{2}\right)}{\left(2N-t\right)\left(2N-t-1\right)\left(2N-t+1\right)}dt= & \int_{2}^{N}\frac{2\log\left(t\right)-\log\left(t-1\right)-\log\left(t+1\right)}{\left(2N-t\right)\left(2N-t-1\right)\left(2N-t+1\right)}dt\nonumber \\
= & -2\int_{2}^{N}\frac{2\log\left(t\right)-\log\left(t-1\right)-\log\left(t+1\right)}{2N-t}dt\label{log int 1}\\
+ & \int_{2}^{N}\frac{2\log\left(t\right)-\log\left(t-1\right)-\log\left(t+1\right)}{2N-t-1}dt\label{log int 2}\\
+ & \int_{2}^{N}\frac{2\log\left(t\right)-\log\left(t-1\right)-\log\left(t+1\right)}{2N-t+1}dt.\label{log int 3}
\end{align}
We will show only a single evaluation since the others are essentially
the same thing. We have that
\begin{align*}
\int_{2}^{N}\frac{\log\left(t\right)}{2N-t}dt= & \int_{1/N}^{1/2}\frac{\log\left(2N\right)+\log\left(u\right)}{1-u}du\\
= & \log\left(2N\right)\left(\log\left(\frac{1}{2}\right)-\log\left(1-\frac{1}{N}\right)\right)+\mathrm{Li}_{2}\left(\frac{1}{2}\right)-\mathrm{Li}_{2}\left(1-\frac{1}{N}\right)
\end{align*}
where $\mathrm{Li}_{2}\left(x\right)$ is the Dilogarithm function.
Using this strategy we will get, for all integrals in \eqref{log int 1},
\eqref{log int 2} and \eqref{log int 3}, a combination of elementary
functions and Dilogarithms.

Finally we note that
\[
-\sum_{n\leq N}\Lambda\left(n\right)\log\left(1-\frac{1}{\left(2N-n\right)^{2}}\right)\ll\frac{1}{N^{2}}\sum_{n\leq N}\Lambda\left(n\right)\ll\frac{1}{N}.
\]

\subsection{The term involving the Von Mangoldt function}

Lastly we have
\[
\sum_{n\leq N}\Lambda\left(n\right)\Lambda\left(2N-n\right)=\frac{1}{2}\sum_{n\leq2N}\Lambda\left(n\right)\Lambda\left(2N-n\right)=\frac{1}{2}\underset{{\scriptstyle m_{1}+m_{2}=2N}}{\sum_{m_{1},m_{2}\leq2N}}\Lambda\left(m_{1}\right)\Lambda\left(m_{2}\right)=\frac{r_{G}\left(2N\right)}{2}.
\]

\subsection{Put together all the pieces}

Finally we can rearrange all the parts. Expanding $\psi^{2}\left(N\right)$
in \eqref{eq:identit=0000E0 base} with its explicit formula and observing
that some terms cancel each other out (see, for example, \eqref{eq:formula termine principale troncata}
and \eqref{eq:serie zeri1}) we get that
\begin{align*}
\sideset{}{^{\prime}}\sum_{n\leq2N}r_{G}\left(n\right)= & 2N^{2}-2\sum_{\rho}\frac{\left(2N-2\right)^{\rho+1}}{\rho\left(\rho+1\right)}\\
+ & 2\sum_{\rho_{1}}\left(2N\right)^{\rho_{1}}\left(\Gamma\left(\rho_{1}\right)\sum_{\rho_{2}}\frac{\left(2N\right)^{\rho_{2}}\Gamma\left(\rho_{2}\right)}{\Gamma\left(\rho_{1}+\rho_{2}+1\right)}\right.\\
- & \sum_{\rho_{2}}\frac{\left(2N\right)^{\rho_{2}}}{\rho_{2}}\left(B_{1/N}\left(\rho_{2}+1,\rho_{1}\right)+B_{1/2}\left(\rho_{1},\rho_{2}+1\right)\right)+F\left(N\right)
\end{align*}
where $F\left(N\right)$ can be explicitly calculated in terms of
special functions like the incomplete Beta function and the Dilogarithm
and $F\left(N\right)=O\left(N\right)$ as $N\rightarrow\infty$.

\subsection{The truncated formula}

We want to prove the truncated version of the formula. We start taking
$T_{1}>2$ and substituting the formula
\begin{align*}
\psi\left(2N-n\right)= & 2N-n-\sum_{\rho:\,\left|\gamma\right|\leq T_{1}}\frac{\left(2N-n\right)^{\rho}}{\rho}-\frac{\zeta^{\prime}}{\zeta}\left(0\right)-\frac{1}{2}\log\left(1-\frac{1}{\left(2N-n\right)^{2}}\right)\\
+ & \frac{\Lambda\left(2N-n\right)}{2}+O\left(\frac{N\log^{2}\left(NT_{1}\right)}{T_{1}}\right)
\end{align*}
 in \eqref{eq:somma contosa}. We will evaluating the sum term by
term. Again we recall that, if $x$ is an integer, then the error
term in \eqref{troncata} can be write as $O\left(x\log^{2}\left(xT\right)/T\right)$
since $\left\langle x\right\rangle \geq1$.

\subsection{The main term of the truncated formula}

Following the $3.1$ section we get
\[
2\sum_{n\leq N}\Lambda\left(n\right)\left(2N-n\right)=2N\psi\left(N\right)+2\psi_{1}\left(N\right).
\]
From \eqref{troncata} we get
\[
2N\psi\left(N\right)=2N^{2}-2\sum_{\rho:\,\left|\gamma\right|\leq T_{2}}\frac{N^{\rho+1}}{\rho}-2N\frac{\zeta^{\prime}}{\zeta}\left(0\right)-N\log\left(1-\frac{1}{N^{2}}\right)+N\Lambda\left(N\right)+O\left(\frac{N^{2}\log^{2}\left(NT_{2}\right)}{T_{2}}\right)
\]
where $T_{2}>2$ will be choose later. For the evaluation of $\psi_{1}\left(N\right)$
we observe that
\[
2\psi_{1}\left(N\right)=2\int_{0}^{N}\psi\left(t\right)dt
\]
\[
=2\int_{2}^{N}\left(t-\sum_{\rho:\,\left|\gamma\right|\leq T_{3}}\frac{t^{\rho}}{\rho}-\frac{\zeta^{\prime}}{\zeta}\left(0\right)-\frac{\log\left(1-1/t^{2}\right)}{2}+O\left(\frac{t\log^{2}\left(T_{3}t\right)}{T_{3}}+\log\left(t\right)\min\left(1,\frac{t}{T_{3}\left\langle t\right\rangle }\right)\right)\right)dt
\]
\[
=N^{2}-2\sum_{\rho:\,\left|\gamma\right|\leq T_{3}}\frac{N^{\rho+1}}{\rho\left(\rho+1\right)}-2N\frac{\zeta^{\prime}}{\zeta}\left(0\right)-\sum_{r\geq1}\frac{N^{-2r+1}}{r\left(2r-1\right)}+O\left(\frac{N^{2}\log^{2}\left(T_{3}N\right)}{T_{3}}\right)
\]
where $T_{3}>2$ and, for the integration of the error term we used
the same strategy of \eqref{eq:int error term 1} and \eqref{int error term 2}.

\subsection{The term involving the series over the non-trivial zeros of $\zeta\left(s\right)$
of the truncated formula}

Following the $3.2$ section we have
\[
-2\sum_{n\leq N}\Lambda\left(n\right)\sum_{\rho:\,\left|\gamma\right|\leq T_{1}}\frac{\left(2N-n\right)^{\rho}}{\rho}=-2\sum_{\rho:\,\left|\gamma\right|\leq T_{1}}\frac{\left(2N\right)^{\rho}}{\rho}\sum_{n\leq N}\Lambda\left(n\right)\left(1-\frac{n}{2N}\right)^{\rho}
\]
then from Lemma \ref{lemma fondamentale}, taking $x=N,\,y=2N,\,\alpha=\rho$,
$T=T_{4}>2$ and observing that, in this case, we have
\[
M\left(\rho,2N,N\right)\ll1
\]
where the implicit constant does not depend on $N$ or $\rho$, we
get
\begin{align}
-2\sum_{n\leq N}\Lambda\left(n\right)\sum_{\rho:\,\left|\gamma\right|\leq T_{1}}\frac{\left(2N-n\right)^{\rho}}{\rho}= & -2\sum_{\rho:\,\left|\gamma\right|\leq T_{1}}\frac{\left(2N-2\right)^{\rho+1}}{\rho\left(\rho+1\right)}+2\sum_{\rho:\,\left|\gamma\right|\leq T_{1}}\frac{N^{\rho+1}}{\rho\left(\rho+1\right)}-4\sum_{\rho:\,\left|\gamma\right|\leq T_{1}}\frac{\left(2N-2\right)^{\rho}}{\rho}\nonumber \\
+ & 2\sum_{\rho_{1}:\,\left|\gamma_{1}\right|\leq T_{1}}\left(2N\right)^{\rho_{1}}\left(\Gamma\left(\rho_{1}\right)\sum_{\rho_{2}:\,\left|\gamma_{2}\right|\leq T_{4}}\frac{\left(2N\right)^{\rho_{2}}\Gamma\left(\rho_{2}\right)}{\Gamma\left(\rho_{1}+\rho_{2}+1\right)}\right.\nonumber \\
- & \left.\sum_{\rho_{2}:\,\left|\gamma_{2}\right|\leq T_{4}}\frac{\left(2N\right)^{\rho_{2}}}{\rho_{2}}\left(B_{1/N}\left(\rho_{2}+1,\rho_{1}\right)+B_{1/2}\left(\rho_{1},\rho_{2}+1\right)\right)\right)\nonumber \\
+ & 2\sum_{\rho_{1}:\,\left|\gamma_{1}\right|\leq T_{1}}\frac{N^{\rho_{1}}}{\rho_{1}}\sum_{\rho_{2}:\,\left|\gamma_{2}\right|\leq T_{4}}\frac{N^{\rho_{2}}}{\rho_{2}}+2\frac{\zeta^{\prime}}{\zeta}\left(0\right)\sum_{\rho:\,\left|\gamma\right|\leq T_{1}}\frac{\left(2N-2\right)^{\rho}}{\rho}\nonumber \\
+ & \log\left(\frac{3}{4}\right)\sum_{\rho:\,\left|\gamma\right|\leq T_{1}}\frac{\left(2N-2\right)^{\rho}}{\rho}-2\sum_{\rho:\,\left|\gamma\right|\leq T_{1}}\frac{\left(2N\right)^{\rho}}{\rho}\omega\left(\rho,2N,N\right)\nonumber \\
- & \Lambda\left(N\right)\sum_{\rho:\,\left|\gamma\right|\leq T_{1}}\frac{N^{\rho}}{\rho}+O\left(\frac{N\log^{2}\left(NT_{4}\right)}{T_{4}}\sum_{\rho:\,\left|\gamma\right|\leq T_{1}}\left(2N\right)^{\beta}\right)\label{eq:erroe serie doppia troncata}
\end{align}
where $T_{4}>2$ will be choose later. It is clear that the size of
\eqref{eq:erroe serie doppia troncata} changes if we assume RH or
not.

\subsection{The constant term of the truncated formula}

Again we observe that
\begin{equation}
-2\sum_{n\leq N}\Lambda\left(n\right)\frac{\zeta^{\prime}}{\zeta}\left(0\right)=-2\frac{\zeta^{\prime}}{\zeta}\left(0\right)\psi\left(N\right)\label{eq:termine costante formula troncata}
\end{equation}
and so we can substitute \eqref{troncata} in \eqref{eq:termine costante formula troncata}
with some $T_{5}>2$ that will be choose later.

\subsection{The term involving the logarithmic function of the truncated formula}

By section $3.4$ we know that By the Abel summation formula we have
\[
-\sum_{n\leq N}\Lambda\left(n\right)\log\left(1-\frac{1}{\left(2N-n\right)^{2}}\right)=-\psi\left(N\right)\log\left(1-\frac{1}{N^{2}}\right)-\int_{2}^{N}\frac{2\psi\left(t\right)}{\left(2N-t\right)\left(2N-t-1\right)\left(2N-t+1\right)}dt.
\]
We fix $T_{6}>2.$ Then we can expand the term
\[
\psi\left(N\right)\log\left(1-\frac{1}{N^{2}}\right)
\]
with \eqref{troncata}. The integral
\[
\int_{2}^{N}\frac{2\psi\left(t\right)}{\left(2N-t\right)\left(2N-t-1\right)\left(2N-t+1\right)}dt
\]
will be treated as in $3.4$ with but, fixing $T_{7}>2,$ we will
get the extra terms
\[
\frac{1}{T}_{7}\int_{2}^{N}\frac{t\log^{2}\left(tT_{7}\right)}{\left(2N-t\right)\left(2N-t-1\right)\left(2N-t+1\right)}dt\ll\frac{\log^{2}\left(NT_{7}\right)}{NT_{7}}
\]
and
\[
\int_{2}^{N}\frac{\log\left(t\right)\min\left(1,t/\left(T_{7}\left\langle t\right\rangle \right)\right)}{\left(2N-t\right)\left(2N-t-1\right)\left(2N-t+1\right)}dt\ll\frac{\log^{2}\left(N\right)}{T_{7}}
\]
arguing as in \eqref{int error term 2}. 

\subsection{The error term and the Von Mangoldt term}

Trivially we have 
\[
\frac{1}{T_{1}}\sum_{n\leq N}\Lambda\left(n\right)\left(2N-n\right)\log^{2}\left(T_{1}\left(2N-n\right)\right)\ll\frac{N^{2}\log^{2}\left(T_{1}N\right)}{T_{1}}
\]
and the ``Von Mangoldt term'' is exactly as in $3.5$.

\subsection{Put together all the pieces of the truncated formula}

Now it remains to expand the term $\psi^{2}\left(N\right)$ in \eqref{eq:identit=0000E0 base}
with \eqref{troncata} fixing some $T_{8}>2.$ We want to exploit
the cancellation of this formula so we have to choose carefully the
$T_{j}$ terms. Obviously if the take $T_{j}\rightarrow\infty$ in
a suitable order we can recognize the previous formula. The choice
of $T_{j}$ is very delicate; we must take advantage of the cancellation
effectively but we do not want to take too large parameters. To finish
our version of the formula we have to impose the condition
\[
T_{j}=T^{\prime},\,j=1,\dots,8,\,j\neq4;
\]
this assumption guarantees, for example, the cancellation of sums
like
\[
-2\sum_{\rho:\,\left|\gamma\right|\leq T_{3}}\frac{N^{\rho+1}}{\rho\left(\rho+1\right)}+2\sum_{\rho:\,\left|\gamma\right|\leq T_{1}}\frac{N^{\rho+1}}{\rho\left(\rho+1\right)}
\]
(see section $3.8$ and $3.9$) and
\[
-2\sum_{\rho:\,\left|\gamma\right|\leq T_{2}}\frac{N^{\rho+1}}{\rho}+2\sum_{\rho:\,\left|\gamma\right|\leq T_{8}}\frac{N^{\rho+1}}{\rho}
\]
(for the first sum see section $3.8$, for the second sum just expand
$\psi^{2}\left(N\right)$ with its truncated formula). Then we take
\[
T_{4}=T^{\prime\prime}
\]
and we estimate the sum in \eqref{eq:erroe serie doppia troncata}
with the Riemann - Von Mangoldt formula and the zero free region of
$\zeta\left(s\right).$

\section{Some Remarks}

We now present some remark of this result. The first remark is that
this result can be improved: with a bit of work it is not difficult
to extract the term $N\cdot\mathrm{constant}$ explicitly and give
a lower error of the asymptotic (which will depends on the RH assumption).
The second remark is that from Lemma \ref{lemma fondamentale} we
can, in principle, find the explicit formula for the Ces\`aro average
of Goldbach numbers
\[
\frac{1}{\Gamma\left(k+1\right)}\sum_{n\leq N}r_{G}\left(n\right)\left(N-n\right)^{k},\,k>0.
\]
We use the words ``in principle'' because we will expect a lot of
terms to calculate. The idea is the following: from the identity
\begin{equation}
\frac{1}{\Gamma\left(k+1\right)}\sum_{n\leq N}r_{G}\left(n\right)\left(N-n\right)^{k}=\frac{N^{k}}{\Gamma\left(k+1\right)}\sum_{n<N}\Lambda\left(n\right)\left(1-\frac{n}{N}\right)^{k}\sum_{m<N-n}\Lambda\left(m\right)\left(1-\frac{m}{N-n}\right)^{k}\label{peso ces=0000E0ro}
\end{equation}
(observed in \cite{Perelli}) we can easily see that the problem boils
down to evaluate the combination of sums involving the Von Mangoldt
function with a Ces\`aro weight. So we can substitute in \eqref{peso ces=0000E0ro}
the explicit formula with $y=N-n,\,x=N-n-1$ and $\alpha=k$ and evaluate
the sum term by term. For example the main term (that we know is $\frac{N^{k+2}}{\Gamma\left(k+3\right)}$
from \cite{Perelli}) will be come from
\[
\frac{N^{k}}{\Gamma\left(k+1\right)}\sum_{n<N}\Lambda\left(n\right)\left(1-\frac{n}{N}\right)^{k}\frac{N-n}{k+1}=\frac{1}{\Gamma\left(k+2\right)}\sum_{n\leq N}\Lambda\left(n\right)\left(N-n\right)^{k+1}.
\]
To confirm our claim note that by the Abel summation formula we find
that
\begin{equation}
\frac{N^{k}}{\Gamma\left(k+1\right)}\sum_{n<N}\Lambda\left(n\right)\left(1-\frac{n}{N}\right)^{k}\frac{N-n}{k+1}=\frac{k+1}{\Gamma\left(k+2\right)}\int_{2}^{N}\psi\left(t\right)\left(N-t\right)^{k}dt\label{main term peso ces=0000E0ro}
\end{equation}
then substituting the explicit formula for $\psi\left(t\right)$ in
\eqref{main term peso ces=0000E0ro} we will find the main term of
the explicit formula plus other terms. In fact we can see that
\begin{align*}
\frac{k+1}{\Gamma\left(k+2\right)}\int_{2}^{N}t\left(N-t\right)^{k}dt= & \frac{k+1}{\Gamma\left(k+2\right)}N^{k+2}\int_{2/N}^{1}u\left(1-u\right)^{k}du\\
= & \frac{N^{k+2}}{\Gamma\left(k+3\right)}+H_{k}\left(N\right)
\end{align*}
where $H_{k}\left(N\right)=O_{k}\left(N^{k+1}\right)$, as expected.

\section{Proof of Theorem 4}

Now we show that a very similar approach to the previous one can be
used also to find the explicit form of the average of primes in tuples.
We start again with a summation identity:
\[
\sum_{h=0}^{M}\sum_{n=0}^{N}a_{n}b_{n+h}=\left(\sum_{n=0}^{N}a_{n}\right)\left(\sum_{n=0}^{N+M}b_{n}\right)-\sum_{n=0}^{N-1}\left(b_{n}\sum_{m=n+1}^{N}a_{m}+a_{n}\sum_{m=n+M+1}^{N+M}b_{m}\right)
\]
where $M,\,N\geq0$ are integers, which can be proved observing that
the set of lattice points $\left\{ \left(i,i+j\right):0\leq i\leq N,0\leq j\leq M\right\} $
forms a parallelogram, which can be seen as a $N\times\left(N+M\right)$
rectangular minus the triangles 
\[
\left\{ \left(i,j\right):0\leq j\leq N-1,j+1\leq i\leq N\right\} ,\,\left\{ \left(i,j\right):0\leq i\leq N-1,i+1+M\leq j\leq M+N\right\} .
\]
We now fix $N>2$ and $0\leq M\leq N$ and define
\[
r_{PT}\left(N,h\right)=\sum_{n=0}^{N}\Lambda\left(n\right)\Lambda\left(n+h\right).
\]
 We have that
\begin{align}
\sum_{h=0}^{M}r_{PT}\left(N,h\right)= & \left(\sum_{n=0}^{N}\Lambda\left(n\right)\right)\left(\sum_{n=0}^{N+M}\Lambda\left(n\right)\right)-\sum_{n=0}^{N-1}\left(\Lambda\left(n\right)\sum_{m=n+1}^{N}\Lambda\left(m\right)+\Lambda\left(n\right)\sum_{m=n+M+1}^{N+M}\Lambda\left(m\right)\right)\nonumber \\
= & \psi\left(N\right)\psi\left(N+M\right)-\sum_{n=0}^{N-1}\Lambda\left(n\right)\left(\psi\left(N\right)-\psi\left(n\right)\right)-\sum_{n=0}^{N-1}\Lambda\left(n\right)\left(\psi\left(N+M\right)-\psi\left(n+M\right)\right)\nonumber \\
= & \sum_{n\leq N}\Lambda\left(n\right)\psi\left(n+M\right)+\sum_{n\leq N}\Lambda\left(n\right)\psi\left(n\right)-\psi^{2}\left(N\right).\label{eq:prime in tuples id base}
\end{align}
Again we will consider
\[
\sum_{n\leq N}\Lambda\left(n\right)\psi\left(n+M\right)+\sum_{n\leq N}\Lambda\left(n\right)\psi\left(n\right)
\]
and we will substitute $\psi\left(x\right)$ with its explicit formula.

\subsection{The main term}

Substituting $\psi\left(x\right)$ with $x$ we get
\[
\sum_{n\leq N}\Lambda\left(n\right)\left(n+M\right)+\sum_{n\leq N}n\Lambda\left(n\right)
\]
\[
=2\sum_{n\leq N}n\Lambda\left(n\right)+M\psi\left(N\right).
\]
By the Abel summation formula we have that
\[
2\sum_{n\leq N}n\Lambda\left(n\right)=2\psi\left(N\right)N-2\psi_{1}\left(N\right),
\]
where $\psi_{1}\left(N\right)$ is \eqref{psi 1}. So, expanding $\psi\left(N\right)$
and $\psi_{1}\left(N\right)$ with their explicit formulae, we obtain
\begin{align*}
2\sum_{n\leq N}n\Lambda\left(n\right)+M\psi\left(N\right)= & N^{2}+NM-2\sum_{\rho}\frac{N^{\rho+1}}{\rho}\\
+ & 2\sum_{\rho}\frac{N^{\rho+1}}{\rho\left(\rho+1\right)}-M\sum_{\rho}\frac{N^{\rho}}{\rho}\\
- & M\frac{\zeta^{\prime}}{\zeta}\left(0\right)-2\frac{\zeta^{\prime}}{\zeta}\left(-1\right)-\left(N+\frac{M}{2}\right)\log\left(1-\frac{1}{N^{2}}\right)\\
+ & \sum_{r\geq1}\frac{N^{-2r+1}}{r\left(2r-1\right)}+\left(N+\frac{M}{2}\right)\Lambda\left(N\right).
\end{align*}
Obviously, from \eqref{eq:stima zeri classica}, we can also see that
\begin{align}
2\sum_{n\leq N}n\Lambda\left(n\right)+M\psi\left(N\right)= & N^{2}+NM-2\sum_{\rho}\frac{N^{\rho+1}}{\rho}+2\sum_{\rho}\frac{N^{\rho+1}}{\rho\left(\rho+1\right)}\nonumber \\
+ & \left(N+\frac{M}{2}\right)\Lambda\left(N\right)+O\left(E\left(M,N\right)\right)\label{errore main term tuples}
\end{align}
where
\[
E\left(M,N\right)=\begin{cases}
N\left(M+1\right)\exp\left(-C\sqrt{\log\left(N\right)}\right) & \mathrm{without}\,\mathrm{RH}\\
\sqrt{N}\left(M+1\right)\log^{2}\left(N\right) & \mathrm{with}\,\mathrm{RH}
\end{cases},\,C>0,
\]
and where the implicit constant in \eqref{errore main term tuples}
does not depend on $M$.

\subsection{The term involving the series over the non-trivial zeros of $\zeta\left(s\right)$}

We have now to evaluate
\[
-\sum_{n\leq N}\Lambda\left(n\right)\sum_{\rho}\frac{\left(n+M\right)^{\rho}}{\rho}-\sum_{n\leq N}\Lambda\left(n\right)\sum_{\rho}\frac{n^{\rho}}{\rho}.
\]
We can consider only the sum
\[
-\sum_{n\leq N}\Lambda\left(n\right)\sum_{\rho}\frac{\left(n+M\right)^{\rho}}{\rho}=-\sum_{\rho}\frac{1}{\rho}\sum_{n\leq N}\Lambda\left(n\right)\left(n+M\right)^{\rho}
\]
since the other is the same sum with the assumption $M=0.$ Again
from the Abel summation formula we obtain
\begin{equation}
\sum_{n\leq N}\Lambda\left(n\right)\left(n+M\right)^{\rho}=\psi\left(N\right)\left(N+M\right)^{\rho}-\rho\int_{2}^{N}\psi\left(t\right)\left(t+M\right)^{\rho-1}dt.\label{eq:serie zeri tuples}
\end{equation}
Substituting the main term of the explicit formula of $\psi\left(t\right)$
in \eqref{eq:serie zeri tuples} we obtain
\begin{align*}
-\rho\int_{2}^{N}t\left(t+M\right)^{\rho-1}dt= & -N\left(N+M\right)^{\rho}+2\left(2+M\right)^{\rho}\\
+ & \frac{\left(N+M\right)^{\rho+1}}{\rho+1}-\frac{\left(2+M\right)^{\rho+1}}{\rho+1}.
\end{align*}
Obviously if $M=0$ we can make the same calculations. Now we consider
the sum over the non-trivial zeros. Assume that $M>0.$ Then by Lemma
\ref{lem:scambio formula expl} we have
\begin{align}
\rho_{1}\int_{2}^{N}\sum_{\rho_{2}}\frac{t^{\rho_{2}}}{\rho_{2}}\left(t+M\right)^{\rho_{1}-1}dt= & \rho_{1}\sum_{\rho_{2}}\frac{1}{\rho_{2}}\int_{2}^{N}t^{\rho_{2}}\left(t+M\right)^{\rho_{1}-1}dt\nonumber \\
= & \rho_{1}\sum_{\rho_{2}}\frac{M^{\rho_{1}+\rho_{2}}}{\rho_{2}}\int_{2/M}^{N/M}u^{\rho_{2}}\left(1+u\right)^{\rho_{1}-1}du\nonumber \\
= & \rho_{1}\sum_{\rho_{2}}\frac{M^{\rho_{1}+\rho_{2}}\left(-1\right)^{\rho_{2}+1}}{\rho_{2}}\left(B_{-N/M}\left(\rho_{2}+1,\rho_{1}\right)-B_{-2/M}\left(\rho_{2}+1,\rho_{1}\right)\right)\label{incomplete beta con x negativo}
\end{align}
where in \eqref{incomplete beta con x negativo} we extended the definition
of incomplete Beta function to a negative integration domain (or,
if we prefer, we can write the integral in terms of the Gauss Hypergeometric
function $_{2}F_{1}\left(a,b;c;z\right)$). In the other case (or
if $M=0$) we get
\begin{align*}
\rho_{1}\int_{2}^{N}\sum_{\rho_{2}}\frac{t^{\rho_{2}}}{\rho_{2}}t^{\rho_{1}-1}dt= & \rho_{1}\sum_{\rho_{2}}\frac{1}{\rho_{2}}\int_{2}^{N}t^{\rho_{1}+\rho_{2}-1}dt\\
= & \rho_{1}\sum_{\rho_{2}}\frac{N^{\rho_{1}+\rho_{2}}-2^{\rho_{1}+\rho_{2}}}{\rho_{2}\left(\rho_{1}+\rho_{2}\right)}.
\end{align*}

Then we have to consider the constant term
\begin{align*}
\rho\frac{\zeta^{\prime}}{\zeta}\left(0\right)\int_{2}^{N}\left(t+M\right)^{\rho-1}dt= & \frac{\zeta^{\prime}}{\zeta}\left(0\right)\left(N+M\right)^{\rho}-\frac{\zeta^{\prime}}{\zeta}\left(0\right)\left(2+M\right)^{\rho}
\end{align*}
and lastly
\begin{align}
\frac{\rho}{2}\int_{2}^{N}\log\left(1-\frac{1}{t^{2}}\right)\left(t+M\right)^{\rho-1}dt= & \frac{1}{2}\log\left(1-\frac{1}{N^{2}}\right)\left(N+M\right)^{\rho}\nonumber \\
- & \frac{1}{2}\log\left(1-\frac{1}{4}\right)\left(2+M\right)^{\rho}\\
+ & \int_{2}^{N}\frac{\left(t+M\right)^{\rho-1}}{t}dt-\frac{1}{2}\int_{2}^{N}\frac{\left(t+M\right)^{\rho-1}}{t-1}dt\label{integrali pallosi}\\
- & \frac{1}{2}\int_{2}^{N}\frac{\left(t+M\right)^{\rho-1}}{t+1}dt\label{eq:integrali pallosi 2}
\end{align}
and the integrals in \eqref{integrali pallosi} and \ref{eq:integrali pallosi 2}
can be evaluated as a difference of two incomplete Beta functions
with negative integration domain. For example
\[
\int_{2}^{N}\frac{\left(t+M\right)^{\rho}}{t}dt=M^{\rho}\left(-1\right)^{\rho}\int_{-2/M}^{-N/M}\frac{\left(1-u\right)^{\rho}}{u}du
\]
\[
=M^{\rho}\left(-1\right)^{\rho}\lim_{\epsilon\rightarrow0^{+}}\left(B_{-2/M}\left(\epsilon,\rho+1\right)-B_{-N/M}\left(\epsilon,\rho+1\right)\right)
\]
(if one prefer this integral can be written as a combination of Gauss
Hypergeometric function). If $M=0$ we can do a similar calculation.
It is more interesting to note that, in the form of \eqref{integrali pallosi}
and \eqref{eq:integrali pallosi 2}, we can easily evaluate the integral
since, summing up, we have
\begin{align}
\sum_{\rho}\frac{\rho}{2}\int_{2}^{N}\log\left(1-\frac{1}{t^{2}}\right)\left(t+M\right)^{\rho-1}dt= & \frac{1}{2}\log\left(1-\frac{1}{N^{2}}\right)\sum_{\rho}\frac{\left(N+M\right)^{\rho}}{\rho}\nonumber \\
- & \frac{1}{2}\log\left(1-\frac{1}{4}\right)\sum_{\rho}\frac{\left(2+M\right)^{\rho}}{\rho}\\
- & \int_{2}^{N}\sum_{\rho}\frac{\left(t+M\right)^{\rho}}{\rho}\frac{1}{t\left(t-1\right)\left(t+1\right)}dtt\label{integrali pallosi-1}\\
\ll & \begin{cases}
N\exp\left(-C\sqrt{\log\left(N\right)}\right), & \mathrm{without}\,\mathrm{RH}\\
\sqrt{N}\log^{2}\left(N\right), & \mathrm{with}\,\mathrm{RH}
\end{cases}\label{eq:stima roba pallosa}
\end{align}
and the implicit constant does not depend on $M.$ So, expand $\psi\left(N\right)$
in \eqref{eq:serie zeri tuples} with its explicit formula and observing
that some terms cancel each other out, we finally get 
\begin{align*}
-\sum_{n\leq N}\Lambda\left(n\right)\sum_{\rho}\frac{\left(n+M\right)^{\rho}}{\rho}-\sum_{n\leq N}\Lambda\left(n\right)\sum_{\rho}\frac{n^{\rho}}{\rho}= & -\sum_{\rho}\frac{\left(N+M\right)^{\rho+1}}{\rho\left(\rho+1\right)}+\sum_{\rho}\frac{\left(2+M\right)^{\rho+1}}{\rho\left(\rho+1\right)}-\\
- & 2\sum_{\rho}\frac{\left(2+M\right)^{\rho}}{\rho}-\sum_{\rho}\frac{N^{\rho+1}}{\rho\left(\rho+1\right)}\\
- & F_{1}\left(N,M\right)-F_{2}\left(N,M\right)\\
+ & \sum_{\rho_{1}}\frac{N^{\rho_{1}}}{\rho_{1}}\sum_{\rho_{2}}\frac{\left(N+M\right)^{\rho_{2}}}{\rho_{2}}+\sum_{\rho_{1}}\frac{N^{\rho_{1}}}{\rho_{1}}\sum_{\rho_{2}}\frac{N^{\rho_{2}}}{\rho_{2}}\\
- & \frac{\Lambda\left(N\right)}{2}\left(\sum_{\rho}\frac{\left(N+M\right)^{\rho}}{\rho}+\sum_{\rho}\frac{N^{\rho}}{\rho}\right)+F_{3}\left(N,M\right)
\end{align*}
where:
\[
F_{1}\left(N,M\right)=\left(\sum_{\rho_{1}}\sum_{\rho_{2}}\frac{M^{\rho_{1}+\rho_{2}}\left(-1\right)^{\rho_{2}+1}}{\rho_{2}}\left(B_{-N/M}\left(\rho_{2}+1,\rho_{1}\right)-B_{-2/M}\left(\rho_{2}+1,\rho_{1}\right)\right)\right)\cdot1\left(M\right),
\]
\[
F_{2}\left(N,M\right)=\left(\sum_{\rho_{1}}\sum_{\rho_{2}}\frac{N^{\rho_{1}+\rho_{2}}-2^{\rho_{1}+\rho_{2}}}{\rho_{2}\left(\rho_{1}+\rho_{2}\right)}\right)\cdot\widetilde{1}\left(M\right),
\]
\[
1\left(M\right)=\begin{cases}
0, & M=0\\
1, & M>0,
\end{cases}
\]
\[
\widetilde{1}\left(M\right)=\begin{cases}
2, & M=0\\
1, & M>0
\end{cases}
\]
and $F_{3}\left(N,M\right)$ can be explicitly calculated in terms
of the incomplete Beta function and with the property
\[
F_{3}\left(N,M\right)\ll\begin{cases}
N\exp\left(-C\sqrt{\log\left(N\right)}\right), & \mathrm{without}\,\mathrm{RH}\\
\sqrt{N}\log^{2}\left(N\right), & \mathrm{with}\,\mathrm{RH.}
\end{cases}
\]
Note that, arguing analogously to $3.2$, we can conclude that the
rearrangement is legitimate and the double series in $F_{1}\left(N,M\right)$
and $F_{2}\left(N,M\right)$ converges.

\subsection{The constant term}

Trivially we have
\[
-2\frac{\zeta^{\prime}}{\zeta}\left(0\right)\sum_{n\leq N}\Lambda\left(n\right)=-2\frac{\zeta^{\prime}}{\zeta}\left(0\right)\psi\left(N\right)
\]

\subsection{The term involving the logarithmic function and the Von Mangoldt
function}

We have now to evaluate
\[
-\frac{1}{2}\sum_{n\leq N}\Lambda\left(n\right)\left(\log\left(1-\frac{1}{\left(n+M\right)^{2}}\right)+\log\left(1-\frac{1}{n^{2}}\right)\right)
\]
\[
=-\frac{\psi\left(N\right)}{2}\left(\log\left(1-\frac{1}{\left(N+M\right)^{2}}\right)+\log\left(1-\frac{1}{N^{2}}\right)\right)
\]
\begin{equation}
+\int_{2}^{N}\frac{\psi\left(t\right)}{\left(M+t\right)\left(M+t+1\right)\left(M+t-1\right)}dt+\int_{2}^{N}\frac{\psi\left(t\right)}{t\left(t+1\right)\left(t-1\right)}dt\label{eq:primi gemelli log term}
\end{equation}
which can be evaluated again integrating term by term the explicit
formula of $\psi\left(t\right)$. Arguing as in the previous sections,
it is possible to calculate the integrals in \eqref{eq:primi gemelli log term}
in terms of elementary functions, incomplete Beta functions and Dilogarithms.
Furthermore
\[
-\frac{1}{2}\sum_{n\leq N}\Lambda\left(n\right)\left(\log\left(1-\frac{1}{\left(n+M\right)^{2}}\right)+\log\left(1-\frac{1}{n^{2}}\right)\right)\ll\sum_{n\leq N}\frac{\Lambda\left(n\right)}{n^{2}}
\]
uniformly in $M$. Obviously we will have also the ``Von Mangoldt
terms'' 
\[
\frac{1}{2}\sum_{n\leq N}\Lambda\left(n\right)\Lambda\left(n+M\right)+\frac{1}{2}\sum_{n\leq N}\Lambda\left(n\right)^{2}=\frac{r_{PT}\left(N,M\right)}{2}+\frac{r_{PT}\left(N,0\right)}{2}
\]

\subsection{Put together all the pieces}

Expanding $\psi^{2}\left(N\right)$ in \eqref{eq:prime in tuples id base}
and observing that some terms cancel each other out (for example the
term $N^{2}$ in Section $5.1$ or the double series $\sum_{\rho_{1}}\frac{N^{\rho_{1}}}{\rho_{1}}\sum_{\rho_{2}}\frac{N^{\rho_{2}}}{\rho_{2}}$
in Section $5.2$) we finally get
\begin{align*}
\sideset{}{^{\prime}}\sum_{h=0}^{M}r_{PT}\left(N,h\right)= & NM+\sum_{\rho}\frac{N^{\rho+1}}{\rho\left(\rho+1\right)}-\sum_{\rho}\frac{\left(N+M\right)^{\rho+1}}{\rho\left(\rho+1\right)}+\sum_{\rho}\frac{\left(2+M\right)^{\rho+1}}{\rho\left(\rho+1\right)}\\
- & \left(\sum_{\rho_{1}}\sum_{\rho_{2}}\frac{M^{\rho_{1}+\rho_{2}}\left(-1\right)^{\rho_{2}+1}}{\rho_{2}}\left(B_{-N/M}\left(\rho_{2}+1,\rho_{1}\right)-B_{-2/M}\left(\rho_{2}+1,\rho_{1}\right)\right)\right)\cdot1\left(M\right)\\
- & \left(\sum_{\rho_{1}}\sum_{\rho_{2}}\frac{N^{\rho_{1}+\rho_{2}}-2^{\rho_{1}+\rho_{2}}}{\rho_{2}\left(\rho_{1}+\rho_{2}\right)}\right)\cdot\widetilde{1}\left(M\right)+\sum_{\rho_{1}}\frac{N^{\rho_{1}}}{\rho_{1}}\sum_{\rho_{2}}\frac{\left(N+M\right)^{\rho_{2}}}{\rho_{2}}\\
+ & \frac{\Lambda\left(N\right)}{2}\sum_{\rho}\frac{N^{\rho}}{\rho}-\frac{\Lambda\left(N\right)}{2}\sum_{\rho}\frac{\left(N+M\right)^{\rho}}{\rho}\\
+ & G\left(N,M\right)
\end{align*}
where 
\[
G\left(N,M\right)\ll\begin{cases}
N\left(M+1\right)\exp\left(-C\sqrt{\log\left(N\right)}\right), & \mathrm{without}\,\mathrm{RH}\\
\sqrt{N}\left(M+1\right)\log^{2}\left(N\right), & \mathrm{with}\,\mathrm{RH}
\end{cases}
\]
as claimed.

It is reasonable to think that a truncated version of this formula
can be done with the same strategy we used in Sections $3.7-3.13$;
it will be the subject of future research.

email address: cantarini\_m@libero.it
\end{document}